\documentclass{imanum}

\jno{drnxxx}
\received{20 April 2009}
\revised{23 March 2008}

\numberwithin{equation}{section}

\DeclareMathOperator{\Tr}{Tr}
\newcommand{\inner}[3][]{(#2,#3)_{#1}}
\newcommand{\norm}[2][]{\|{#2}\|_{{#1}}}
\newcommand{\snorm}[2][]{|{#2}|_{{#1}}}

\newcommand{\Norm}[2][]{\Big\|{#2}\Big\|_{{#1}}}
\newcommand{\abs}[1]{|#1|}

\newcommand{\dd}{ { \mathrm{d}} }
\newcommand{\ee}{ { \mathrm{e}} }

\newcommand{\pt}{\partial}

\newcommand{\cF}{{ \mathcal F}}
\newcommand{\cD}{{ \mathcal D}}

\newcommand{\cO}{{ \mathcal O}}

\newcommand{\IR}{{ \mathbf R}}

\newcommand{\IE}{{ \mathbf E}}

\newcommand{\HS}{\mathrm{HS}}

\begin{document}

\title{Finite element approximation of the linearized Cahn-Hilliard-Cook equation}
\shorttitle{Approximation of the linearized Cahn-Hilliard-Cook equation}

\author{%
  {\sc Stig Larsson}\thanks{Corresponding author. Supported by the
    Swedish Research Council (VR) and by the Swedish Foundation for
    Strategic Research (SSF) through GMMC, the Gothenburg Mathematical
    Modelling Centre.
    Email: stig@chalmers.se}\\[2pt]
  Department of Mathematical Sciences,
  Chalmers University of Technology \\
  and University of Gothenburg, SE--412 96 Gothenburg,
  Sweden\\[6pt]
  {\sc and}\\[6pt]
  {\sc Ali Mesforush}\thanks{Email:ali.mesforush@alumni.chalmers.se}\\[2pt]
  School of Mathematical Sciences, Shahrood University of Technology,
  Shahrood, Iran }

\shortauthorlist{S.~Larsson, A.~Mesforush}

\maketitle


\begin{abstract} {The linearized Cahn-Hilliard-Cook equation is
    discretized in the spatial variables by a standard finite element
    method. Strong convergence estimates are proved under suitable
    assumptions on the covariance operator of the Wiener process,
    which is driving the equation. The backward Euler time stepping is
    also studied. The analysis is set in a framework based on analytic
    semigroups.  The main effort is spent on proving detailed error
    bounds for the corresponding deterministic Cahn-Hilliard
    equation. The results should be interpreted as results on 
    approximation of the stochastic convolution, which is a part of
    the mild solution of the nonlinear Cahn-Hilliard-Cook equation.}
{Cahn-Hilliard-Cook equation, stochastic convolution, Wiener process,
  finite element method, backward Euler method, mean square, error
  estimate, strong convergence}
\end{abstract}


\section{Introduction} \label{sec:1}
When the Cahn-Hilliard equation (cf.~\citet{Cahn-Hill,Cahn-Hill1}) is
perturbed by noise, we obtain the so-called Cahn-Hilliard-Cook
equation (cf.~\citet{Dirk1,cook})
\begin{equation}\label{1.1}
\begin{aligned}
& \dd u-\Delta v\,\dd t=\dd W, && \textrm{for } x\in \cD,  \, t >0,\\
& v=-\Delta u + f(u), && \textrm{for }  x\in \cD, \, t>0,\\
& \frac{\partial u}{\partial n}=0,\ \frac{\partial\Delta u}{\partial
  n} = 0, 
&& \textrm{for } x \in \partial \cD, \, t>0,\\
& u(\cdot,0)=u_0,
\end{aligned}
\end{equation}
where $u = u(x,t)$, $\Delta =\sum_{i=1}^d \frac{\pt^2}{\pt x_i^2}$,
and $\frac{\pt}{\pt n}$ denotes the outward normal derivative on
$\pt\cD$.  We assume that $\cD$ is a bounded domain in $\IR^d$ for $d
\le 3$ with sufficiently smooth boundary.  A typical $f$ is $f(s) =
s^3 -s$. The purpose of this work is to study numerical approximation
by the finite element method of the linearized Cahn-Hilliard-Cook
equation, where $f=0$.

We use the semigroup framework of \citet{DaPratoZabczyk} in order to
give \eqref{1.1} a rigorous meaning.  Let $\norm{\cdot}$ and
$(\cdot,\cdot)$ denote the usual norm and inner product in the Hilbert
space $H = L_2(\cD)$ and let $H^s = H^s(\cD)$ be the usual Sobolev
space with norm $\norm[s]{\cdot}$. We also let $\dot{H}$ be the
subspace of $H$, which is orthogonal to the constants, that is, $\dot{H}
= \{v \in H:(v,1) =0 \}$, and we let $P\colon H\to\dot{H}$ be the
orthogonal projector. 

We define the linear operator $A = - \Delta$ with domain of definition
\begin{equation*}
 D(A) = \Big\{ v \in H^2 : \frac{ \pt v}{\pt n} =0 \  \text{on } \pt \cD \Big\}.
\end{equation*}
Then $A$ is a selfadjoint, positive definite, unbounded linear operator
on $\dot{H}$ with compact inverse. When it is considered as an
unbounded operator on $H$, it is positive semidefinite with an
orthonormal eigenbasis $\{\varphi_j\}_{j=0}^\infty$ and corresponding
eigenvalues $\{\lambda_j\}_{j=0}^\infty$ such that
\begin{equation*}
0 = \lambda_0 < \lambda_1 \le \lambda_2 \le \cdots \le \lambda_j 
\le \cdots ,
\quad \lambda_j \to \infty.
\end{equation*}
The first eigenfunction is constant, $\varphi_0 =
\abs{\cD}^{-\frac{1}{2}}$.  We also define
\begin{align} \label{defnorm}
  \snorm[s]{v} = \norm{A^{\frac{s}{2}}Pv} 
  =\Big( \sum_{j=1}^\infty \lambda_j^s(v,\varphi_j)^2\Big)^{1/2}, 
\quad s\in\IR,  
\end{align}
and $\dot{H}^s = \{v\in\dot{H}:\snorm[s]{v}<\infty\}$ for $s\ge0$.
For $s<0$ we define $\dot{H}^s$ to be the closure of $\dot{H}$ with
respect to $\snorm[s]{\cdot}$.  Then $\dot{H}^0=\dot{H}$ and
$\norm{v}^2=\snorm[0]{v}^2+(v,\varphi_0)^2$.  It is well known that,
for integer $s \ge 0$, $\dot{H}^s$ is a subspace of $H^s \cap \dot H$
characterized by certain boundary conditions and that the norms
$\abs{\cdot}_s$ and $\norm[s]{\cdot}$ are equivalent on
$\dot{H}^s$. In particular, we have $\dot{H}^1 = H^1 \cap \dot H$ and
the norm $\snorm[1]{v} = \norm{A^{\frac{1}{2}} v} = \norm{ \nabla v}$
is equivalent to $\norm[1]{v}$ on $\dot{H}^1$.

For $v \in H$ we define 
\begin{align*}
\ee^{-tA^2} v 
= \sum_{j=0}^{\infty} \ee^{-t\lambda_j^2} (v,\varphi_j)\varphi_j.
\end{align*}
Then $\{ E(t)\}_{t \ge 0} = \{ \ee ^{-tA^2}\}_{t \ge 0}$  is the
analytic semigroup on $H$ generated by $-A^2$.  We note that 
\begin{align*}
E(t) v 
= \sum_{j=1}^{\infty} \ee^{-t\lambda_j^2} (v,\varphi_j)\varphi_j
+(v,\varphi_0)\varphi_0
= E(t)Pv+(I-P)v,  
\end{align*}
where $(I-P)v=|\cD|^{-1}\int_{\cD}v\,\dd x$ is the average of $v$. 

Let $(\Omega,\cF,\mathbf{P},\lbrace\cF_t\rbrace_{t \ge 0})$ be a
filtered probability space, let $Q$ be a selfadjoint, positive
semidefinite, bounded linear operator on $H$, and let $\lbrace
W(t)\rbrace_{t \ge 0}$ be an $H$-valued $Q$-Wiener process adapted to
the filtration $\lbrace\cF_t\rbrace_{t \ge 0}$.

Now the Cahn-Hilliard-Cook equation \eqref{1.1} may be written formally
\begin{equation}\label{1.3}
\dd X(t) + A^2 X(t)\, \dd t + Af(X(t)) \, \dd t = \dd W(t), \quad t > 0;
\quad X(0) = X_0.
\end{equation}
The semigroup framework of \citet{DaPratoZabczyk} gives a rigorous
meaning  to this in terms of the mild solution, which satisfies the integral equation
\begin{equation*}
 X(t) = E(t) X_0 - \int_0^t E(t-s) Af(X(s)) \, \dd s + \int_0^t E(t-s) \, \dd W(s),
\end{equation*}
where $\int_0^t\cdots\, \dd W(s)$ denotes the $H$-valued It\^o
integral. Existence and uniqueness of solutions is proved in
\citet{Prato}.  This is based on the natural splitting of the
solution as $X(t)=Y(t)+W_A(t)$, where
\begin{align}  \label{stochconvol}
  W_A(t)=\int_0^t E(t-s) \, \dd W(s)
\end{align}
is a stochastic convolution, and where 
\begin{align*}
  Y(t)=E(t) X_0 - \int_0^t E(t-s) Af(X(s)) \, \dd s 
\end{align*}
satisfies the random evolution problem
\begin{align*} 
 \dot{Y}(t) + A^2 Y(t) + A f\big(Y(t)+W_A(t)\big) = 0,\quad t >0; \quad
 Y(0)=X_0.
\end{align*}
The study of the stochastic convolution $W_A(t)$ is thus a first step
towards the study of the nonlinear problem.

In this work we therefore study numerical approximation of the linearized
Cahn-Hilliard-Cook equation
\begin{equation}\label{1.4}
  \dd X + A^2 X \, \dd t = \dd W,\quad t >0; \quad
  X(0) = X_0,
\end{equation}
with the mild solution
\begin{equation}\label{mildX}
X(t) = E(t) X_0 + \int_0^t E(t-s) \, \dd W(s) .
\end{equation} 
The nonlinear equation is studied in a forthcoming paper \citet{KLMchc}.
We remark that a linearized equation of the form \eqref{1.4}, but with
$A^2$ replaced by $A^2+A$ is studied by numerical simulation in the
physics literature \citet{elder,klein}.

For the approximation of the Cahn-Hilliard equation we follow the
framework of \citet{stig}. We assume that we have a family
$\{S_h\}_{h>0}$ of finite-dimensional approximating subspaces of
$H^1$. Let $P_h \colon H \to {S}_h$ denote the orthogonal projector.
We then define $\dot{S}_h = \{ \chi \in S_h : (\chi , 1) = 0\}$.  The
operator $A_h \colon {S}_h \to \dot{S}_h$ (the ``discrete Laplacian'')
is defined by
\begin{equation*}
 (A_h \chi , \eta) = (\nabla \chi , \nabla \eta), \quad \forall
 \chi\in S_h,\, \eta \in \dot{S}_h,
\end{equation*}
The operator $A_h$ is selfadjoint, positive definite on $\dot{S}_h$,
positive semidefinite on $S_h$, and $A_h$ has an orthonormal
eigenbasis $\lbrace \varphi_{h,j} \rbrace_{j=0}^{N_h}$ with
corresponding eigenvalues $\lbrace \lambda_{h,j} \rbrace_{j=0}^{N_h}$.
We have
\begin{equation*}
0 = \lambda_{h,0} < \lambda_{h,1} \le \cdots \le \lambda_{h,j} \le
\cdots \le \lambda_{h,N_h},
\end{equation*}
and $\varphi_{h,0} = \varphi_0 = \abs{\cD}^{-\frac{1}{2}}$. 
Moreover, we define $E_h(t) \colon S_h \to S_h$ by
  \begin{equation*}
\begin{split}
  E_h(t)v_h=\ee^{-t A_h^2} v_h
  = \sum_{j=1}^{N_h} \ee^{-t\lambda_{h,j}} 
   \inner{v_h}{\varphi_{h,j}}\varphi_{h,j} 
   + \inner{v_h}{\varphi_0}\varphi_{0}. 
  \end{split}
\end{equation*}
Then $\{ E_h(t)\}_{t \ge 0}$ is the analytic semigroup generated by
$-A_h^2$.  Clearly, $P_h \colon \dot{H} \to \dot{S}_h$ and
\begin{equation*}
E_h(t)P_h v = E_h(t)P_h P v + (I-P)v. 
\end{equation*}

The finite element approximation of the linearized Cahn-Hilliard-Cook
equation \eqref{1.4} is: Find $X_h(t) \in {S}_h$ such that,
\begin{equation}\label{LinCHC}
 \dd X_h + A_h^2 X_h \, \dd t = P_h \, \dd  W, \quad t >0; \quad
 X_h(0) = P_h X_0.
\end{equation}
The mild solution of \eqref{LinCHC} is
\begin{equation}\label{mildXh}
 X_h(t) = E_h(t) P_h X_0 + \int_0^t E_h(t-s) P_h \,\dd W(s).
\end{equation}
We note that 
\begin{align*}
\int_0^t E(t-s)(I-P) \, \dd W(s) =(I-P) \int_0^t \, \dd W(s) = (I-P)W(t) ,  
\end{align*}
so that \eqref{mildX} can be written
\begin{align} \label{fix}
X(t) = E(t)P X_0 +(I-P)X_0 + \int_0^t E(t-s)P \, \dd W(s)
 +(I-P)W(t), 
\end{align} 
and similarly, for \eqref{mildXh},
\begin{align*}
X_h(t)
= E_h(t)P_hP X_0 +(I-P)X_0 
+ \int_0^t E_h(t-s)P_hP \, \dd W(s)
 +(I-P)W(t).   
\end{align*}
Therefore, the error analysis can be based on the formula 
\begin{align} \label{errorX}
X_h(t)-X(t)= \big(E_h(t)P_h-E(t)\big)P X_0
+\int_0^t \big(E_h(t-s)P_h-E(t-s)\big)P \, \dd W(s),  
\end{align}
and it is sufficient to work in the spaces $\dot{H}$ and $\dot{S}_h$.
Note that the numerical computations are carried out in $S_h$ and that
$\dot{S}_h$ is only used in the analysis.

Let $k = \delta t$ be a timestep, $t_n = nk,\, \delta X_{h,n} =
X_{h,n} - X_{h,n-1},\, \delta W_n = W(t_n) - W(t_{n-1})$, and apply
Euler's method to \eqref{LinCHC} to get
\begin{equation}\label{DeltaXhn}
 \delta X_{h,n} + A_h^2 X_{h,n}\,\delta t = P_h \,\delta W_n, \quad
 n\ge1;\quad X_{h,0}=P_hX_0.
\end{equation}
With $E_{kh} = (I + kA_h^2)^{-1}$ we obtain a discrete variant of the mild solution
\begin{equation*}
  X_{h,n} = E_{kh}^n P_h X_0 + \sum_{j=1}^n E_{kh}^{n-j+1} P_h \,\delta W_j.
\end{equation*}

In Section \ref{sec:2} we assume that $\lbrace S_h \rbrace_{h > 0}$
admits an error estimate of order $\cO(h^r)$ as the mesh parameter $h
\to 0$ for some integer $r \ge 2$. Then we show error estimates for
the semigroup $E_h(t)$ with minimal regularity requirement. More
precisely, in Theorem \ref{lem1} we show, for $\beta \in [1,r]$ and
all $t\ge0$,
\begin{align*}
 & \norm{F_h(t)v} \le Ch^{\beta}\,\snorm[\beta]{v}, \quad  v \in \dot{H}^{\beta},\\
 & \Big( \int_0^t \norm{F_h(\tau) v}^2 \, \dd \tau\Big)^{\frac{1}{2}}  \le
    C\abs{\log h}h^{\beta} \,\snorm[\beta - 2]{v},
   \quad v\in \dot{H}^{\beta -2},
\end{align*}
where $F_h(t) = E_h(t) P_h - E(t)$ is the error operator in
\eqref{errorX}.  

Analogous estimates are obtained for the implicit Euler approximation in Theorem \ref{lemma2}.

In Section \ref{sec:3} we follow the technique developed in
\citet{yubin1,yubin2} and use these estimates to prove strong
convergence estimates for approximation of the linear
Cahn-Hilliard-Cook equation. Let $L_2(\Omega , \dot{H}^\beta)$ be the
space of square integrable $\dot{H}^\beta$-valued random variables
with norm
\begin{equation} \label{meansquare}
 \norm[L_2(\Omega,\dot{H}^\beta)]{X} 
= \Big( \IE\big\{ \snorm[\beta]{X}^2\big\}\Big)^{\frac{1}{2}}
= \Big( \int_{\Omega} \snorm[\beta]{X(\omega)}^2 \,\dd \mathbf{P}(\omega)\Big)^{\frac{1}{×2}},
\end{equation}
and let $\norm[\HS]{T}$ denote the Hilbert-Schmidt norm of linear
operators on $H$, $\norm[\HS]{T}^2 = \sum_{j=1}^{\infty}
\norm{T\phi_j}^2$, where $\lbrace \phi_j\rbrace_{j=1}^{\infty}$ is an
arbitrary orthonormal basis for $H$. In Theorem \ref{thm1} we study
the spatial regularity of the mild solution \eqref{mildX} and show
\begin{equation*}
 \norm[L_2(\Omega,\dot{H}^{\beta})]{X(t)} \le C\big(
 \norm[L_2(\Omega,\dot{H}^{\beta})]{X_0} + \norm[\HS]{A^{\frac{\beta
       -2}{2}}Q^{\frac{1}{2}}}\big)\quad \text{for } \beta \ge 0.
\end{equation*}
Moreover, in Theorem \ref{thm2} we show strong convergence for the mild solution $X_h$ in \eqref{mildXh}:
\begin{equation*}
 \|X_h(t)  - X(t)\|_{L_2(\Omega , H)}
   \le Ch^{\beta}\big( \norm[L_2(\Omega , \dot{H}^{\beta})]{X_0} 
+ \abs{\log h}\norm[\HS]{A^{\frac{\beta - 2}{2}}Q^{\frac{1}{2}}}
\big), 
\quad \beta \in [1,r].
\end{equation*}
In Theorem \ref{thm3} for the fully discrete case we obtain similarly, for $\beta \in [1,\min (r,4)]$,
\begin{align*}
  \|X_{h,n}   -  X(t_n)  \|_{L_2(\Omega , H)}
    \le \big(C\abs{\log h} h^{\beta} 
+ C_{k,\beta}k^{\frac{\beta}{4}}\big)\big( \norm[L_2(\Omega ,
\dot{H}^{\beta})]{X_0} 
+ \norm[\HS]{A^{\frac{\beta - 2}{2}}Q^{\frac{1}{2}}} \big),
\end{align*}
where $C_{\beta,k} = \frac{C}{4-\beta}$ for $\beta <4$ and
$C_{\beta,k} = C\abs{\log k}$ for $\beta =4$.

Note that these bounds are uniform with respect to $t\ge0$ and  $t_n\ge0$.

Our results require that
$\norm[\HS]{A^{\frac{\beta-2}{2}}Q^{\frac{1}{2}}} < \infty.$ In order
to see what this means we compute two special cases. For $Q=I$
(spatially uncorrelated noise, or space-time white noise), by using
the asymptotics $\lambda_j \sim j^{\frac{2}{d}}$, we have
\begin{equation*}
\norm[\HS]{A^{\frac{\beta-2}{2}}Q^{\frac{1}{2}}}^2 
= \norm[\HS]{A^{\frac{\beta-2}{2}}}^2 
= \sum_{j=1}^{\infty} \lambda_j^{\beta -2} 
\sim \sum_{j=1}^{\infty} j^{(\beta-2) \frac{2}{d}} < \infty,
\end{equation*}
if $\beta < 2 - \frac{d}{2}$. Hence, for example, $\beta <
\frac{1}{2}$ if $d = 3$. On the other hand, if $Q$ is of trace class,
$\Tr(Q) = \norm[\HS]{Q^{\frac{1}{2}}}^2 < \infty$, then we may take
$\beta = 2$.

There are few studies of numerical methods for the Cahn-Hilliard-Cook
equation. We are only aware of \citet{Weber}, in which convergence in
probability was proved for a difference scheme for the nonlinear
equation in multiple dimensions, and \citet{KossiorisZouraris}, where
strong convergence was proved for the finite element method for the
linear equation in 1-D.
\section{Error estimates for the Cahn-Hilliard semigroup}\label{sec:2}
We begin by recalling some inequalities. Let $\{
E(t)\}_{t \ge 0} = \{ \ee ^{-tA^2}\}_{t \ge 0}$ and $\{ E_h(t)\}_{t
  \ge 0} = \{ \ee ^{-tA_h^2}\}_{t \ge 0}$ be the semigroups generated
by $-A^2$ and $ -A_h^2$, respectively. By the smoothing property there
exist positive constants $c,C$, independent of $h$ and $t$, such that
\begin{align}
 & \norm{A_h^{2 \beta} E_h(t) P_h Pv} 
+ \norm{A^{2 \beta} E(t) P v} 
\le C t^{- \beta}\ee^{-ct} \norm{v},\quad \beta \ge 0, \label{bddness}\\
 & \int_0^t \norm{A_h E_h(s) P_h Pv}^2\, \dd s 
+ \int_0^t \norm{A E(s) P v}^2\, \dd s \le C \norm{v}^2\label{half}.
\end{align}
Let  $R_h \colon  \dot{H}^1 \to \dot{S}_h$ be the Ritz projector defined by
\begin{equation*}
(\nabla R_h v , \nabla \chi) = (\nabla v , \nabla \chi), \quad \forall \chi \in \dot{S}_h.
\end{equation*}
It is clear that $R_h = A_h^{-1} P_h A$. We assume that for some
integer $r \ge 2$, we have the error bound, with the norm defined in \eqref{defnorm},
\begin{equation}\label{ritzerr}
 \norm{R_h v - v } \le Ch^{\beta} \snorm[\beta]{v}, \quad  v \in \dot{H}^{\beta}, \, 1 \le \beta \le r.
\end{equation}
This holds with $r=2$ for the standard piecewise linear Lagrange
finite element method in a bounded convex polygonal domain $\cD$. For
higher order elements the situation is more complicated and we refer
to standard texts on the finite element method.  In the next theorem
we prove error estimates for the Cahn-Hilliard semigroup in the
semidiscrete case.
\begin{theorem}\label{lem1}
 Set $ F_h(t) = E_h(t)P_h - E(t)$. Then there are $h_0$ and $C$, such
 that for $h\le h_0$, $1 \le \beta \le r$ and $t \ge 0$, we have
 \begin{align}
 & \norm{F_h(t)v} \le Ch^{\beta}\,\snorm[\beta]{v}, 
\quad  v \in \dot{H}^{\beta} \label{first},\\
 & \Big( \int_0^t \norm{F_h(\tau) v}^2 \, \dd \tau\Big)^{\frac{1}{2}}  
\le  C\abs{\log h}h^{\beta}\, \snorm[\beta - 2]{v},
   \quad v\in \dot{H}^{\beta -2}\label{second}.
\end{align}
\end{theorem}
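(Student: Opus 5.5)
We work throughout in $\dot H$ and $\dot S_h$, since by \eqref{errorX} only $P v$ matters. The plan is to write the error as an energy-type identity using the Ritz projector. Set $e(t)=F_h(t)v = u_h(t)-u(t)$, where $u(t)=E(t)v$ solves $u_t+A^2u=0$ and $u_h(t)=E_h(t)P_hv$ solves $u_{h,t}+A_h^2u_h=0$. Since $R_h=A_h^{-1}P_hA$, we have $A_hR_h = P_hA$. We decompose $e=\theta+\rho$ with $\rho=R_hu-u$ and $\theta=u_h-R_hu\in\dot S_h$. This is the standard Cahn--Hilliard splitting of \citet{stig}.

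Next we derive the equation for $\theta$. It is convenient to introduce the auxiliary variable $w=Au$ and to use $A_h^{-1}$ on $\dot S_h$. A computation gives
\[
\theta_t+A_h^2\theta=-P_h\rho_t + A_h P_h(Au-R_hAu)=: -P_h\rho_t + A_hP_h\sigma,
\qquad \sigma=Au-R_hAu .
\]
Here we used $A_h^2R_hu = A_hP_hAu$ and $A_hP_h R_h Au = A_h R_hAu$. By Duhamel,
\[
\theta(t)=E_h(t)\theta(0)-\int_0^t E_h(t-s)P_h\rho_t(s)\,\dd s+\int_0^t A_hE_h(t-s)P_h\sigma(s)\,\dd s .
\]
The $\rho_t$ term is handled by integration by parts in time, which moves the derivative onto $E_h$ and produces boundary terms $P_h\rho(t)$ and $E_h(t)P_h\rho(0)$. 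Combined with \eqref{ritzerr} and \eqref{bddness} this gives bounds like $Ch^\beta\sup\snorm[\beta]{u}$. The singular integrals $\int_0^t\norm{A_h^2E_h(t-s)}\,\dd s$ would diverge logarithmically. To deal with this, we split $[0,t]$ at $t-h^4$ (or $t-h^{2}$), using $\norm{A_h^2E_h(\tau)P_h}\le C\tau^{-1}\ee^{-c\tau}$ away from the endpoint and an inverse-inequality-free bound near it. This is where the $\abs{\log h}$ arises, and it is also why a smallness condition $h\le h_0$ appears.

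For \eqref{first} we would try to avoid the logarithm by a smarter route. We first prove the bound for $\beta=r$ and $\beta=1$ separately, then interpolate since $F_h(t)$ is linear and the spaces $\dot H^\beta$ interpolate. The case $\beta\le 2$ should follow from energy estimates: test the $\theta$-equation with $A_h^{-2}\theta_t$ or $A_h^{-1}\theta$ in negative norms, combined with $\snorm[\beta]{E(t)v}\le\snorm[\beta]{v}$. For the high-order case we would use the time-weighted energy technique, multiplying by $t$ and $t^2$, together with $\norm{A^{\beta/2}E(t)v}$ bounds from \eqref{bddness}. The uniformity in $t\ge0$ comes from the exponential decay $\ee^{-ct}$ in \eqref{bddness} on $\dot H$.

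For \eqref{second}, the natural approach is $L_2$-in-time estimates: $\int_0^t\norm{\rho}^2\le Ch^{2\beta}\int_0^\infty\snorm[\beta]{E(s)v}^2\dd s = Ch^{2\beta}\cdot\tfrac12\snorm[\beta-2]{v}^2$, which follows from \eqref{half} in spectral form. The $\theta$ part is treated by the same Duhamel formula, now estimated in $L_2(0,t)$ via Young's convolution inequality with kernel $\norm{A_hE_h(\tau)P_h}\lesssim\tau^{-1/2}\ee^{-c\tau}$ where possible. The logarithmic factor is accepted where the kernel is $\tau^{-1}$ after the cutoff at $\tau=h^4$, using the inverse estimate $\norm{A_h^2\chi}\le Ch^{-4}\norm{\chi}$. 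We expect the main obstacle to be this $\theta$-estimate in the low-regularity case $\beta$ near $1$, where $v\in\dot H^{-1}$: there $u$ has no time derivative in a usable norm and the integration by parts must be arranged to keep only $\rho$ and never $\rho_t$ integrated against a non-integrable kernel.
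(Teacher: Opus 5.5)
Your proposal has a genuine gap: it gives no workable estimate for the term that actually produces the logarithm in \eqref{second}. That term is the one driven by $\sigma=(I-R_h)Au$, which up to sign is the paper's $\eta=-(R_h-I)Gu_t$, since $Gu_t=-Au$. (Incidentally, with your definitions the $\theta$-equation reads $\theta_t+A_h^2\theta=-P_h\rho_t-A_hP_h\sigma$.)

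Using \eqref{ritzerr} at order $\beta$, the best bound for $v\in\dot H^{\beta-2}$ is $\norm{\sigma(s)}\le Ch^\beta\norm{A^2E(s)A^{\frac{\beta-2}{2}}v}\le Ch^\beta s^{-1}\snorm[\beta-2]{v}$, which is not integrable at $s=0$. Neither form of Young's inequality rescues this:
\begin{itemize}
\item With the $L_1$ kernel $\tau^{-1/2}\ee^{-c\tau}$ you need $\sigma\in L_2(0,t;H)$. But $\int_0^\infty\norm{\sigma}^2\,\dd s\le Ch^{2\beta}\snorm[\beta]{v}^2$ costs two more derivatives than you have.
\item With the kernel in $L_2$ (where an $h^4$-cutoff could give $\abs{\log h}^{1/2}$) you still need $\int_0^t\norm{\sigma}\,\dd s$, which diverges under the bound above.
\end{itemize}
So the singularity responsible for the logarithm sits at the initial time and comes from the rough data. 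It is not at $s=t$, and cutting the kernel off near $s=t$ does not touch it. Moreover, the inverse estimate $\norm{A_h\chi}\le Ch^{-2}\norm{\chi}$ needs quasi-uniform meshes. That is not among the hypotheses, which consist only of \eqref{ritzerr}, and elsewhere you call the same step ``inverse-inequality-free''.

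The missing idea, which the paper uses, has two parts.
\begin{itemize}
\item Only an $L_1$-in-time bound on $\eta$ is needed. Write the error equation as $G_h^2e_t+e=\rho+G_h\eta$ and test with $e$, using $G_he(0)=0$. This gives \eqref{3.6}: $\int_0^t\norm{e}^2\,\dd\tau\le\int_0^t\norm{\rho}^2\,\dd\tau+(\int_0^t\norm{\eta}\,\dd\tau)^2$. In your Duhamel language, this amounts to testing $w_t+A_h^2w=-A_hP_h\sigma$ with $A_h^{-2}w$ instead of using Young.
\item $\int_0^t\norm{\eta}\,\dd\tau$ is made finite by applying \eqref{ritzerr} at a slightly lower order $\gamma<\beta$. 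This gives $\norm{\eta(\tau)}\le Ch^\gamma\tau^{-1+\frac{\beta-\gamma}{4}}\ee^{-c\tau}\snorm[\beta-2]{v}$, whose integral is at most $Ch^\gamma/(\beta-\gamma)$. Choosing $\beta-\gamma=1/\abs{\log h}$ turns this into $C\abs{\log h}h^\beta$.
\end{itemize}

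The $\rho$-part needs no logarithm and no inverse inequality. The terms $\theta(0)$ and $E_h(t)P_h\rho(0)$, each undefined for $\beta<3$, cancel. The remaining term $\int_0^tA_h^2E_h(t-s)P_h\rho(s)\,\dd s$ is bounded in $L_2(0,t;H)$ by $\norm[L_2(0,t;H)]{\rho}$, via a spectral maximal-regularity argument.

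As for \eqref{first}, your interpolation plan presupposes endpoint estimates you have not established. The pointwise Duhamel route with an $h^4$-cutoff would introduce exactly the logarithm that \eqref{first} excludes. The paper instead tests the same error equation with $te_t$ and combines the result with \eqref{3.6}. This yields \eqref{first} for all $\beta\in[1,r]$ directly.
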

Note that $F_h(t)v=F_h(t)Pv$ for $v\in H$, so that it is sufficient to
take $v\in\dot H$.  The reason why we assume $\beta \ge 1$ is that in
\eqref{second} we need at least $v \in \dot{H}^{-1}$ for $E_h(t)P_h v$
to be defined.  The bound \eqref{first} can be found in \citet{stig},
but \eqref{second} is new.  We adapt the technique from \citet{stig} to
provide a self-contained proof.  

\begin{proof}
Let $u(t) = E(t) v$, $u_h(t) = E_h(t) P_h v$ be the solutions of 
\begin{align}\label{1.2}
u_t + A^2 u &= 0, \quad t>0; \quad 
u(0) = v,  \\
\label{2.3}
  u_{h,t} + A_h^2 u_h  &= 0, \quad t >0; \quad
   u_h(0)= P_h v.
\end{align}
Here $u_t$ denotes the time derivative. Set $e(t)=u_h(t) - u(t)$. We
want to prove that
\begin{equation*}
\begin{aligned}
& \norm{e(t)} \le Ch^{\beta}\, \snorm[\beta]{v}, \quad v \in \dot{H}^{\beta},\\
& \Big( \int_0^t \norm{e(\tau)}^2 \, \dd \tau \Big)^{\frac{1}{2}}
   \le C\abs{\log h}h^{\beta}\,\snorm[\beta - 2]{v}, \quad v \in \dot{H}^{\beta - 2}.
\end{aligned}
\end{equation*}
Let $G=A^{-1}P$ and $G_h = A_h^{-1} P_hP$. Apply $G$ to \eqref{1.2} to
get $G u_t + Au = 0$, and apply $G_h^2$ to \eqref{2.3} to get $G_h^2
u_{h,t} + u_h =0$. Hence,
\begin{equation*}
G_h^2 e_t + e  =-G_h^2 u_t - u  +G_h(Gu_t+Au) = (G_h A - I) u - G_h (G_h A -  I )G u_t,
\end{equation*}
that is,
\begin{equation}\label{main}
G_h^2 e_t +  e =  \rho +G_h \eta,
\end{equation}
where $\rho = (R_h-I)u ,\eta =-(R_h - I)G u_t$. Take the inner product of \eqref{main} by $e_t$ to get
\begin{equation*}
 \norm{G_h e_t}^2 + \frac{1}{2}  \frac{\dd}{\dd t}\norm{e}^2 =  (\rho , e_t) +(\eta , G_h e_t),
\end{equation*}
Since
$(\eta , G_h e_t) \le \norm{\eta} \norm{ G_h e_t} \le \frac{1}{2} \norm{\eta}^2 + \frac{1}{2} \norm{G_h e_t}^2$,
we obtain
\begin{equation*}
\norm{G_h e_t}^2 + \frac{\dd}{\dd t}\norm{e}^2 \le 2  (\rho,e_t) +  \norm{\eta}^2.
\end{equation*}
Multiply this inequality by $t$ to get 
\begin{align*}
t\norm{G_h e_t}^2 + t \frac{\dd}{\dd t}\norm{e}^2 
\le 2  t (\rho,e_t) +  t \norm{\eta}^2.
\end{align*}
For compactness of notation we write $t\norm{\eta}^2$ instead of
$t\norm{\eta(t)}^2$ and similarly for the other terms. 
Note that
\begin{align*}
  t \frac{\dd}{\dd t} \norm{e}^2= \frac{\dd}{\dd t}\big(t \norm{e}^2\big) - \norm{e}^2,
  \quad 
  t(\rho , e_t)=\frac{\dd}{\dd t}\big(t(\rho,e)\big)-(\rho , e)-t(\rho_t,e),
\end{align*}
so that
\begin{equation*}
t \norm{G_h e_t}^2 + \frac{\dd}{\dd t}\big(t \norm{e}^2\big) \le 2 \frac{\dd}{\dd t}\big(t(\rho,e)\big)+2\abs{(\rho , e)}+2 \abs{t(\rho_t,e)}+t\norm{\eta}^2+ \norm{e}^2.
\end{equation*}
But
\begin{align*}
\abs{(\rho ,e)} &\le \norm{\rho} \norm{e} \le \frac{1}{2}\norm{\rho}^2+\frac{1}{2}\norm{e}^2,\\
\abs{t(\rho_t,e)}&\le t \norm{\rho_t}\norm{e} \le \frac{1}{2} t^2 \norm{\rho_t}^2+\frac{1}{2}\norm{e}^2.
\end{align*}
Hence, 
\begin{equation*}
 t \norm{G_h e_t}^2 +  \frac{\dd}{\dd t}\big( t \norm{e}^2\big)
\le 2 \frac{\dd}{\dd t}\big(t(\rho,e)\big) +  \norm{\rho}^2 + t^2 \norm{\rho_t}^2 + t \norm{\eta}^2+ 3\norm{e}^2.
\end{equation*}
Integrate over $[0,t]$ and use Young's inequality to get
\begin{align*}
\int_0^t \tau \norm{G_h e_t}^2 \, \dd \tau +  t \norm{e}^2
& \le  2 t \norm{\rho}^2 + \frac{1} {2} t\norm{e}^2 + \int_0^t \norm{\rho}^2 \, \dd \tau +  \int_0^t \tau^2 \norm{\rho_t}^2 \, \dd \tau \\
&\quad + \int_0^t \tau \norm{\eta}^2 \, \dd \tau + 3  \int_0^t \norm{e}^2 \, \dd \tau.
\end{align*}
Hence,
\begin{equation}\label{3.4}
\begin{aligned}
 t \norm{e}^2
 & \le Ct\norm{\rho}^2 +C \int_0^t \big( \norm{\rho}^2 + \tau^2 \norm{\rho_t}^2 +  \tau \norm{\eta}^2 + \norm{e}^2 \big)\,\dd \tau.
\end{aligned}
\end{equation}
We must bound $\int_0^t \norm{e}^2\,\dd \tau$. Multiply \eqref{main} by $e$ to get
\begin{equation*}
  \frac{1}{2}\frac{\dd}{\dd t}\norm{G_he}^2 +  \norm{e}^2
  \le  \norm{\rho}\norm{e}+\norm{\eta}\norm{G_h e}
  \le \frac{1}{2}\norm{\rho}^2+\frac{1}{2}\norm{e}^2+\norm{\eta}\max_{0 \le \tau \le t}\norm{G_h e},
\end{equation*}
so that
\begin{equation}\label{3.5}
\frac{\dd}{\dd t}\norm{G_he}^2 +  \norm{e}^2
\le  \norm{\rho}^2+2\norm{\eta}\max_{0 \le \tau \le t}\norm{G_h e}.
\end{equation}
Integrate \eqref{3.5}, note that $G_he(0) = A_h^{-1} P_h(P_h - I) v =0$, to get
\begin{equation*}
  \norm{G_h e}^2 + \int_0^t \norm{e}^2 \, \dd \tau
  \le  \int_0^t \norm{\rho}^2 \, \dd \tau + \max_{0 \le \tau \le t}\norm{G_h e}^2+
   \Big(\int_0^t \norm{\eta} \, \dd \tau\Big)^2.
\end{equation*}
Hence, since $t$ is arbitrary,
\begin{equation}\label{3.6}
  \int_0^t \norm{e}^2 \, \dd \tau \le
\int_0^t \norm{\rho}^2 \, \dd \tau + \Big(\int_0^t \norm{\eta} \, \dd \tau\Big)^2.
\end{equation}
We insert \eqref{3.6} in \eqref{3.4} and conclude
\begin{equation}\label{3.7}
 t \norm{e}^2
 \le Ct\norm{\rho}^2+C\int_0^t \big( \norm{\rho}^2+ \tau^2 \norm{\rho_t}^2 + \tau\norm{\eta}^2\big)\,\dd\tau 
\quad + C\Big( \int_0^t \norm{\eta} \, \dd \tau\Big)^2.
\end{equation}
We compute the terms in the right hand side. With $v \in \dot{H}^{\beta}$, recalling $\rho = (R_h - I)u$ and using  \eqref{ritzerr}, we have
\begin{equation}\label{ett}
\begin{aligned}
\norm{\rho(t)}
\le Ch^{\beta} \snorm[\beta]{u(t)}
\le Ch^{\beta} \norm{E(t) A^{\frac{\beta}{2}}v}
\le Ch^{\beta} \norm{A^{\frac{\beta}{2}}v}
\le Ch^{\beta}\snorm[\beta]{v},
\end{aligned}
\end{equation}
so that,
\begin{equation*}
t \norm{\rho}^2 \le Ch^{2\beta} t \snorm[\beta]{v}^2,\quad
\int_0^t \norm{\rho}^2\,\dd \tau \le Ch^{2\beta}t \snorm[\beta]{v}^2.
\end{equation*}
Similarly, by \eqref{bddness},
\begin{equation*}
\norm{\rho_t(t)}
 \le Ch^{\beta} \snorm[\beta]{u_t(t)}
\le  C h^{\beta} \norm{A^2 E(t)A^{\frac{\beta}{2}}v}
 \le Ch^{\beta} t^{-1} \snorm[\beta]{v},
\end{equation*}
so that
\begin{equation}\label{num2}
 \int_0^t \tau^2 \norm{\rho_t}^2\,\dd \tau \le Ch^{2\beta} t \snorm[\beta]{v}^2.
\end{equation}
Moreover, since $\eta = -(R_h - I)Gu_t=(R_h-I)GA^2E(t)v$,
\begin{equation*}
 \norm{\eta(t)}
  \le Ch^{\beta} \snorm[\beta]{Gu_t(t)}
  \le  C  h^{\beta} \norm{A E(t) A^{\frac{\beta}{2}} v}
  \le Ch^{\beta} t^{-\frac{1}{2}} \snorm[\beta]{v},
\end{equation*}
so that
\begin{align*}
  \Big(\int_0^t \norm{\eta} \, \dd \tau \Big)^2 \le Ch^{2\beta} t \snorm[\beta]{v}^2,\quad
  \int_0^t \tau \norm{\eta}^2 \, \dd \tau \le Ch^{2\beta} t \snorm[\beta]{v}^2.
\end{align*}
By inserting these in \eqref{3.7} we conclude
\begin{equation*}
 t\norm{e}^2 \le Ch^{2\beta}t \snorm[\beta]{v}^2,
\end{equation*}
which proves \eqref{first}.

To prove \eqref{second} we recall \eqref{3.6} and let $v \in \dot{H}^{\beta-2}$. By using \eqref{ritzerr} and \eqref{half} we obtain
\begin{equation}\label{2.18}
\begin{split}
\int_0^t \norm{\rho}^2 \, \dd \tau
 & \le Ch^{2\beta} \int_0^t \snorm[\beta]{u}^2\,\dd \tau
 = Ch^{2\beta} \int_0^t \norm{AE(\tau) A^{\frac{\beta-2}{2}}v}^2\,\dd \tau\\
 & \le Ch^{2\beta} \snorm[\beta-2]{v}^2.
 \end{split}
\end{equation}
Now we compute $ \int_0^t \norm{\eta} \, \dd \tau$. To this end we assume first $1 < \beta \le r$ and let  $1 \le \gamma < \beta$. By using \eqref{bddness} and \eqref{ritzerr} we get
\begin{align*}
\int_0^t \norm{\eta}\,\dd \tau
& \le Ch^{\gamma} \int_0^t \snorm[\gamma]{Gu_t}\,\dd \tau
 = Ch^{\gamma}\int_0^t\norm{A^{2-\frac{\beta-\gamma}{2}}E(\tau) A^{\frac{\beta-2}{2}}v}\,\dd \tau\\
& \le Ch^{\gamma}\int_0^t \tau^{-1+\frac{\beta-\gamma}{4}}\ee^{-c\tau}\,\dd \tau \,\snorm[\beta-2]{v},\\
\end{align*}
where, since $0 < \beta - \gamma \le r-1$,
\begin{equation*}
\int_0^t \tau^{-1+\frac{\beta-\gamma}{4}}\ee^{-c\tau}\,\dd \tau
= \frac{4}{\beta - \gamma} \int_0^{t^{\frac{\beta-\gamma}{4}}} \ee^{-cs^{\frac{4}{\beta - \gamma}}}\,\dd s
\le \frac{C}{\beta-\gamma} \int_0^{\infty} \ee^{-cs^{\frac{4}{r-1}}}\,\dd s.
\end{equation*}
Hence, with $C$ independent of $\beta$,
\begin{equation}\label{star}
\int_0^t \norm{\eta}\,\dd \tau \le \frac{Ch^{\gamma}}{\beta - \gamma} \snorm[\beta-2]{v}.
\end{equation}
Now let $\frac{1}{\beta-\gamma}=\abs{\log h}=-\log h$, so $\gamma \to \beta$ as $h \to 0$, and
\begin{equation*}
\gamma \log h = (\gamma -\beta+\beta)\log h = 1 + \beta \log h.
\end{equation*}
Therefore we have
\begin{equation*}
\frac{h^{\gamma}}{\beta - \gamma}
= \abs{\log h}\ee ^{\gamma\log h}\\
= \abs{\log h} \ee^{1+\beta\log h}\\
\le C\abs{\log h}h^{\beta}.
\end{equation*}
Put this in \eqref{star} to get, for $1 < \beta \le r$,
\begin{equation}\label{yek}
\int_0^t \norm{\eta}\,\dd \tau \le  Ch^{\beta}\abs{\log h}\snorm[\beta-2]{v},
\end{equation}
and hence also for $1 \le \beta \le r$, because $C$ is independent of $\beta$.
Finally, we put \eqref{2.18} and \eqref{yek} in \eqref{3.6} to get
\begin{equation*}
 \Big(\int_0^t \norm{e}^2 \,\dd\tau\Big)^{\frac{1}{2}}
\le
 C\abs{\log h}h^{\beta}\snorm[\beta-2]{v},
\end{equation*}
which is \eqref{second}. The proof is complete. 
\end{proof}

Now we turn to the fully discrete case. The backward Euler method applied to
\begin{align*}
 u_{h,t}+  A_h^2u_h =  0, \quad t >0;\quad
 u_h(0) = P_h v,
\end{align*}
defines $U_n \in {S}_h$ by
\begin{equation}\label{3.15}
 \pt U_n+  A_h^2 U_n = 0, \quad n \ge 1;\quad
 U_0 = P_h v,
\end{equation}
where $\pt U_n = \frac{1}{k} (U_n - U_{n-1})$. Denoting $E_{kh}^n = (I
+ k A_h^2)^{-n}$, we have $U_n = E_{kh}^n v$.  The next theorem
provides error estimates for the Euler approximation of the
Cahn-Hilliard semigroup.
\begin{theorem}\label{lemma2}
  Set $F_n = E_{kh}^n P_h - E(t_n)$. Then there are $h_0, k_0$ and $C$, such
 that for $h\le h_0$,  $k\le k_0$, $1 \le \beta \le \min(r,4)$, and $n \ge 1$, we have
 \begin{align}
  & \norm{F_n v}
  \le
  C(h^{\beta} + k^{\frac{\beta}{4}})\snorm[\beta]{v}, \quad v \in \dot{H}^{\beta},\label{lemp1}\\
  & \Big( k \sum_{j=1}^n \norm{F_j v}^2 \Big)^{\frac{1}{2}}
  \le
  \big( C\abs{\log h}h^{\beta} + C_{\beta,k}k^{\frac{\beta}{4}}\big)\snorm[\beta-2]{v},\quad v \in \dot{H}^{\beta-2},\label{lemp2}
 \end{align}
where $C_{\beta,k} = \frac{C}{4-\beta}$ for $\beta <4$ and $C_{\beta,k} = C\abs{\log k}$ for  $\beta =4$.
\end{theorem}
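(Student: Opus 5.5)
We split the error as
\[
F_n v = \big(E_{kh}^n P_h - E_h(t_n)P_h\big)v + \big(E_h(t_n)P_h - E(t_n)\big)v ,
\]
so that $F_n v$ is a time-discretization error of the semidiscrete problem plus a spatial error. For the second term we invoke Theorem~\ref{lem1}. Bound \eqref{first} gives $Ch^\beta\snorm[\beta]{v}$ directly. For the discrete $\ell_2$ sum in \eqref{lemp2} we cannot use \eqref{second} as it stands, since it controls a time integral rather than a sum over grid points. We therefore write $F_h(t_j)v = F_h(\tau)v + \int_\tau^{t_j}\pt_s F_h(s)v\,\dd s$ for $\tau\in(t_{j-1},t_j]$. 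The correction term is bounded in $\ell_2$ by the same technique, using $\norm{\tau\,\pt_\tau F_h(\tau)v}$; this is controlled by the $t\norm{\rho_t}$ and $t^{1/2}\norm{\eta}$-type quantities already estimated in the proof of Theorem~\ref{lem1}. Alternatively, and more cleanly, we rerun the energy argument \eqref{main}--\eqref{3.6} for the fully discrete error $e_n = U_n - u(t_n)$ directly, with $\pt$ in place of $\pt_t$. The identity $2(\pt e_n,e_n)=\pt\norm{e_n}^2 + k\norm{\pt e_n}^2$ gives the same inequalities with additional nonnegative terms. That route produces all terms at once.

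For the time-discretization part we work spectrally on $\dot S_h$, since both $E_{kh}^n$ and $E_h(t_n)$ are functions of the selfadjoint operator $A_h$. With $\lambda=\lambda_{h,j}$ and $x=k\lambda^2$, the classical rational estimate for backward Euler is
\[
\big|(1+x)^{-n}-\ee^{-nx}\big| \le C\min\big(1,\, n^{-1}\big)\le C\,(nx)^{-s}\,x^{s}\ \text{-type bounds},
\]
which yields $|(1+k\lambda^2)^{-n}-\ee^{-nk\lambda^2}|\le C (k\lambda^2)^{s}$ for $0\le s\le 1$, uniformly in $n$. Taking $s=\beta/4\le 1$ gives
\[
\norm{(E_{kh}^n-E_h(t_n))P_h v}\le Ck^{\beta/4}\norm{A_h^{\beta/2}P_h v}.
\]
To return to $\snorm[\beta]{v}$ we need $\norm{A_h^{\beta/2}P_hv}\le C\snorm[\beta]{v}$. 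This is clear for $\beta\le 1$ and obtainable for $\beta=2$ via $A_hR_h=P_hA$; in general, however, it fails for discrete powers. We therefore avoid it by writing $P_hv = P_h(v - R_h v) + R_h v$ and using $A_h^{\beta/2}$ with $\beta\in\{1,2\}$ plus interpolation, or by applying the splitting $A_h^2 = A_hP_hA\,\cdots$ on $R_h$. For $\beta$ up to $4$ we use $A_h^2R_h v= A_hP_hAv$ together with $A_hP_h = A_hR_h + A_h(P_h-R_h)$, and absorb the $(P_h-R_h)$ pieces using inverse estimates and \eqref{ritzerr}. Since this produces $h$-terms, we may alternatively compare $E_{kh}^nP_h$ with $E_h(t_n)P_h$ only after first projecting, and route the leftover through the spatial error.

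For \eqref{lemp2} with $v\in\dot H^{\beta-2}$ we square the spectral bound and sum. We need
\[
k\sum_{n\ge1}\big|(1+x)^{-n}-\ee^{-nx}\big|^2 \le C\,k^{\beta/2}\lambda^{4}\lambda^{\beta-4}\cdot\frac{1}{4-\beta}.
\]
Using $|(1+x)^{-n}-\ee^{-nx}|\le C\min(x,\,(nx)^{-1})$-type estimates, the sum over $n$ is split at $n\approx 1/x$. This produces a factor $x^{\beta/2}$ times a series $\sum n^{-2+\ldots}$. It gives a geometric/harmonic tail that converges with constant $\frac{C}{4-\beta}$ for $\beta<4$ and diverges logarithmically at $\beta=4$. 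In that case we cut at $n\le N\sim k^{-1}$ large-$\lambda$ frequencies, bounded by $\lambda_{h,N_h}^2k$, which yields $C\abs{\log k}$.

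The main obstacle I expect is this discrete-time square-sum estimate, together with passing from discrete norms $A_h^{\beta/2}P_h$ to $\snorm[\beta-2]{v}$ for $\beta>2$. If the norm transfer fails, the fallback is the energy method on $e_n$ mentioned above: it parallels \eqref{3.6} and only invokes \eqref{ritzerr} and smoothing of $E(t)$. The time-truncation errors $u_t(t_n)-\pt u(t_n)$ are then estimated by $k^{\beta/4}$ times negative powers of $t_n$, integrated as in \eqref{star}.
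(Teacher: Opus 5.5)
\textbf{Gaps in your primary route.} The splitting through $E_h(t_n)P_h$, with $E_{kh}^n-E_h(t_n)$ treated by spectral calculus on $\dot S_h$, does not close under the theorem's hypotheses, for two reasons.

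First, the temporal bound comes out in discrete norms: $Ck^{\beta/4}\norm{A_h^{\beta/2}P_hv}$ for \eqref{lemp1}, and $\norm{A_h^{(\beta-2)/2}P_hv}$ for \eqref{lemp2}. Bounding these by $\snorm[\beta]{v}$, resp.\ $\snorm[\beta-2]{v}$, requires $H^1$-stability of $P_h$ and inverse inequalities such as $\norm{A_h\chi}\le Ch^{-2}\norm{\chi}$, i.e.\ a quasi-uniform mesh. This is needed already at $\beta=1$, where $\norm{A_h^{1/2}P_hv}=\norm{\nabla P_hv}$, so it is not ``clear'' there. The theorem assumes nothing beyond \eqref{ritzerr}. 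The only transfer available for free is $\norm{A_h^{-s/2}P_hv}\le\snorm[-s]{v}$ for $0\le s\le 1$, which handles the temporal part of \eqref{lemp2} only for $\beta\le 2$.

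Second, for the spatial part of \eqref{lemp2} you need the grid-point sum $k\sum_j\norm{F_h(t_j)v}^2$. Reducing this to \eqref{second} requires bounds on $k\norm{F_h(t_1)v}^2$ and on $k\int_{t_1}^{t_n}\tau\norm{\pt_\tau F_h(\tau)v}^2\,\dd\tau$ for $v\in\dot H^{\beta-2}$. These are nonsmooth-data estimates for $e$ and $e_t$, and the proof of Theorem~\ref{lem1} contains neither. There, the $t\norm{\rho_t}$ and $t^{1/2}\norm{\eta}$ quantities bound $t\norm{e}^2$, not $\norm{\tau e_t}$; the only derivative information is $\int_0^t\tau\norm{G_he_t}^2\,\dd\tau$, a much weaker norm. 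Note also that $e_t\ne -F_h(t)A^2v$, since $A_h^2P_h\ne P_hA^2$.

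Your $\beta=4$ discussion is also off. One has $\sum_{n\ge1}\abs{(1+x)^{-n}-\ee^{-nx}}^2\le C\min(x,x^{-2})\le Cx^{(\beta-2)/2}$ for $\beta\le4$. So the spectral square sum gives $Ck^{\beta/2}\lambda^{\beta-2}$ per mode, with no $\frac{1}{4-\beta}$ and no logarithm; your display has $\lambda^{\beta}$ instead. Moreover, anything controlled by $\lambda_{h,N_h}^2k$ would impose a step restriction of the type $k\le Ch^4$.

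\textbf{Your fallback is the paper's proof.} The energy method you list as an alternative is exactly the paper's argument, and it works because it avoids both obstacles. The fully discrete error satisfies \eqref{3.26}, $G_h^2\pt e_n+e_n=\rho_n+G_h\eta_n+G_h\delta_n$, with $\rho_n=(R_h-I)u(t_n)$, $\eta_n=-(R_h-I)G\pt u_n$ and $\delta_n=-G(\pt u_n-u_t(t_n))$. All the data are built from the exact solution, so only \eqref{ritzerr} and the smoothing bounds \eqref{bddness}, \eqref{half} are used, and no discrete norm of $v$ ever appears.

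In the discrete analogue \eqref{3.41} of \eqref{3.6}, the only sum that must be compared with an integral is $k\sum_j\norm{\rho_j}^2$. Since $\rho_t=(R_h-I)u_t$ is cheap, this is done via $k\sum_{j\ge2}\norm{\rho_j}^2\le 2\int_0^{t_n}\norm{\rho}^2\,\dd\tau+2k\int_{t_1}^{t_n}\tau\norm{\rho_t}^2\,\dd\tau$, with $k\norm{\rho_1}^2$ treated separately. This is what replaces your missing bound on $e_t$. The $\eta$-term needs nothing new, because $k\sum_j\norm{\eta_j}\le\int_0^{t_n}\norm{\eta}\,\dd\tau$, which is \eqref{yek}. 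The factor $C_{\beta,k}$ comes from the $\ell_1$-sum $(k\sum_j\norm{\delta_j})^2$, using $\norm{\delta_j}\le C\int_{t_{j-1}}^{t_j}\tau^{-2+\beta/4}\,\dd\tau\,\snorm[\beta-2]{v}$ for $j\ge2$.

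For \eqref{lemp1}, the term $k\norm{\pt e_n}^2$ is not simply discarded: it absorbs the indefinite term $k(\pt\rho_n,\pt e_n)$ produced by the discrete product rule. One also weights by $t_{n-1}$ so that \eqref{3.28} telescopes, and uses $k\le t_{n-1}$ for $n\ge2$. As written, your fallback is a one-line pointer, so these steps are what you would have to supply.

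If you want to keep a splitting, split through $(I+kA^2)^{-n}$ instead of $E_h(t_n)P_h$. The temporal error is then measured with $A$, so the norm-transfer problem disappears, but the remaining spatial error still needs this discrete energy argument.
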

\begin{proof}
Let $G$ and $G_h$ be as in the proof of Theorem \ref{lem1}. With $e_n = U_n -u_n = E_{kh}^n P_h v - E(t_n) v$, we get
\begin{equation}\label{3.26}
G_h^2 \pt e_n +  e_n =  \rho_n + G_h \eta_n + G_h \delta_n,
\end{equation}
where $u_n = u(t_n)$, $u_{t,n} = u_t(t_n)$ and
\begin{equation*}
\rho_n = (R_h - I)u_n,\quad \eta_n = -(R_h - I)G \pt u_n,\quad \delta_n = -G(\pt u_n - u_{t,n}).
\end{equation*}
Multiply \eqref{3.26} by $\pt e_n$ and note that
\begin{equation*}
(\eta_n,G_h\pt e_n) \le \norm{\eta_n}^2 + \frac{1}{4} \norm{G_h \pt e_n}^2, \,
(\delta_n,G_h\pt e_n) \le \norm{\delta_n}^2 + \frac{1}{4} \norm{G_h \pt e_n}^2,
\end{equation*}
to get
\begin{equation}\label{3.27}
\norm{G_h \pt e_n}^2 + 2  (e_n,\pt e_n) \le  2  (\rho_n,\pt e_n)+ 2 \norm{\eta_n}^2+ 2 \norm{\delta_n}^2.
\end{equation}
We have the following identities
\begin{align}
 \label{3.28}
\pt(a_n b_n)
& = (\pt a_n)b_n + a_{n-1}(\pt b_n)\\
\label{3.29}
& = (\pt a_n)b_n+a_n(\pt b_n) - k (\pt a_n)(\pt b_n).
\end{align}
By using \eqref{3.29} we have
\begin{align*}
& 2  (e_n , \pt e_n)=  \pt \norm{e_n}^2 + k  \norm{\pt e_n}^2,\\
&  (\rho_n , \pt e_n) =   \pt (\rho_n , e_n) - (\pt \rho_n , e_n) +  k (\pt \rho_n , \pt e_n).
\end{align*}
Put these in \eqref{3.27} and  cancel $k \norm{\pt e_n}^2$ to get
\begin{equation*}
\norm{G_h \pt e_n}^2 +  \pt \norm{e_n}^2 \le 2  \pt (\rho_n,e_n) - 2 (\pt \rho_n , e_n) + k  \norm{\pt \rho_n}^2 + 2 \norm{\eta_n}^2 + 2\norm{\delta_n}^2.
\end{equation*}
Multiply this by $t_{n-1}$, and note that $k \le t_{n-1}$ for $n \ge
2$, so that for $n \ge 1$ we have 
\begin{equation}\label{3.33}
\begin{split}
t_{n-1}\norm{G_h \pt e_n}^2 +  t_{n-1} \pt \norm{e_n}^2
 &\le  2t_{n-1} \pt (\rho_n,e_n)
- 2  t_{n-1}(\pt \rho_n , e_n) +  t_{n-1}^2 \norm{\pt \rho_n}^2\\
& \quad + 2 t_{n-1}\norm{\eta_n}^2 + 2t_{n-1}\norm{\delta_n}^2.
\end{split}
\end{equation}
By \eqref{3.28} we have
\begin{align*}
 t_{n-1} \pt \norm{e_n}^2 &=  \pt(t_n \norm{e_n}^2) -  \norm{e_n}^2, \\
 2 t_{n-1}\pt(\rho_n,e_n) &= 2 \pt(t_n(\rho_n,e_n)) - 2 (\rho_n,e_n). 
\end{align*}
Put these in \eqref{3.33} to get
\begin{equation}\label{3.36}
\begin{split}
t_{n-1}  \norm{G_h \pt e_n}^2 +  \pt (t_n \norm{e_n}^2)
& \le  C \big(\pt (t_n(\rho_n,e_n))+\norm{\rho_n}^2 
+ t_{n-1}^2 \norm{\pt \rho_n}^2 + \norm{e_n}^2\big)\\
& \quad + C\big( t_{n-1}\norm{\eta_n}^2 + t_{n-1}\norm{\delta_n}^2 \big).
\end{split}
\end{equation}
Note that
\begin{align}
  k \sum_{j=1}^n \pt\big(t_j \norm{e_j}^2\big) = t_n\norm{e_n}^2,\label{3.37} \quad
  k \sum_{j=1}^n \pt\big(t_j(\rho_j,e_j)\big) = t_n(\rho_n,e_n).
\end{align}
By summation in \eqref{3.36} and using \eqref{3.37} we get
\begin{equation}\label{3.39}
\begin{split}
k \sum_{j=1}^n t_{j-1} \norm{G_h \pt e_j}^2 +  t_n \norm{e_n}^2
&\le  C  t_n \norm{\rho_n}^2
+ C  k \sum_{j=1}^n \big( \norm{\rho_j}^2 
+ t_{j-1}^2 \norm{\pt \rho_j}^2 + \norm{e_j}^2\big)\\
& \quad+ Ck  \sum_{j=1}^n \big( t_{j-1} \norm{\eta_j}^2 + t_{j-1} \norm{\delta_j}^2 \big).
\end{split}
\end{equation}
Now we estimate $k\sum_{j=1}^n \norm{e_j}^2$. Multiply \eqref{3.26} by $e_n$ to get
\begin{equation}\label{computeej}
2(G_h^2 \pt e_n , e_n) + \norm{e_n}^2 \le \norm{\rho_n}^2 + 2\big( \norm{\eta_n} + \norm{\delta_n} \big) \norm{G_h e_n}.
\end{equation}
By \eqref{3.29} we have
\begin{equation}
2(G_h^2 \pt e_n , e_n) = 2 (\pt G_h e_n , G_h e_n) = \pt \norm{G_h e_n}^2 + k \norm{\pt G_h e_n}^2.
\end{equation}
By summation in \eqref{computeej} and using $G_h e_0 = 0$, we get
\begin{equation*}
  \norm{G_h e_n}^2 +  k \sum_{j=1}^n \norm{e_j}^2 
    \le  k \sum_{j=1}^n \norm{\rho_j}^2 
        + \frac{1}{2} \max_{j \le n} \norm{G_h e_j}^2
      + 2 \Big( k \sum_{j=1}^n \big ( \norm{\eta_j} 
     + \norm{\delta_j} \big) \Big)^2.
\end{equation*}
Hence,
\begin{equation}\label{3.41}
 k \sum_{j=1}^n \norm{e_j}^2 \le  k \sum_{j=1}^n \norm{\rho_j}^2 + 2 \Big( k \sum_{j=1}^n \big ( \norm{\eta_j}+ \norm{\delta_j} \big) \Big)^2.
\end{equation}
By putting \eqref{3.41} in \eqref{3.39} we get
\begin{equation}\label{3.42}
\begin{split}
 t_n \norm{e_n}^2 &\le
  \,C  t_n \norm{\rho_n}^2
 + C k \sum_{j=1}^n \Big( \norm{\rho_j}^2 + t_{j-1}^2  \norm{\pt \rho_j}^2 +  t_{j-1}\norm{\eta_j}^2 + t_{j-1}\norm{\delta_j}^2  \Big)\\
 &\quad + C \Big( k \sum_{j=1}^n \big ( \norm{\eta_j}+ \norm{\delta_j} \big) \Big)^2.
\end{split}
\end{equation}
We compute these terms. With $v \in \dot{H}^{\beta}$ we have by \eqref{ett},
\begin{equation}\label{set1}
\norm{\rho_n}^2 \le Ch^{2\beta} \snorm[\beta]{v}^2,\quad
k\sum_{j=1}^n \norm{\rho_j}^2 \le Ch^{2\beta} t_n \snorm[\beta]{v}^2.
\end{equation}
By using the Cauchy-Schwartz  inequality we have
\begin{align*}
k\sum_{j=1}^n t_{j-1}^2\norm{\pt \rho_j}^2
 &= k\sum_{j=2}^n t_{j-1}^2 \Norm{\frac{1}{k} \int_{t_{j-1}}^{t_j} \rho_t \,\dd \tau}^2\\
& \le \sum_{j=2}^n\Big( t_{j-1}^2 \frac{1}{k} \int_{t_{j-1}}^{t_j} \tau^{-2}\dd \tau \int_{t_{j-1}}^{t_j} \tau^{2} \norm{\rho_t(\tau)}^2\, \dd \tau \Big)
 \le \int_0^{t_n} \tau^2 \norm{\rho_t}^2 \dd \tau.
\end{align*}
Hence, by \eqref{num2},
\begin{equation}\label{num3}
k\sum_{j=1}^n t_{j-1}^2\norm{\pt \rho_j}^2 \le Ch^{2\beta} t_n \snorm[\beta]{v}^2.
\end{equation}
By using \eqref{ritzerr} and \eqref{bddness} we have
\begin{equation*}
\begin{split}
\norm{\eta_j} & \le Ch^{\beta} \snorm[\beta]{G \pt u_j}
 \le \frac{Ch^{\beta}}{k} \Norm{ \int_{t_{j-1}}^{t_j} A E(\tau) A^{\frac{\beta}{2}} v \,\dd \tau}\\
 &\le \frac{Ch^{\beta}}{k} \int_{t_{j-1}}^{t_j} \tau^{-\frac{1}{2}}\,\dd \tau \norm{A^{\frac{\beta}{2}}v}
 \le \frac{Ch^{\beta}}{k} (\sqrt{t_j} - \sqrt{t_{j-1}})\snorm[\beta]{v}
 \le \frac{Ch^{\beta}}{\sqrt{t_{j}}} \snorm[\beta]{v}.
\end{split}
\end{equation*}
So
\begin{equation}\label{set2}
k \sum_{j=1}^n t_{j-1}\norm{\eta_j}^2 \le Ch^{2\beta}  t_n \snorm[\beta]{v}^2,\quad
 k \sum_{j=1}^n \norm{\eta_j} \le Ch^{\beta} t_n^{\frac{1}{2}}\snorm[\beta]{v}.
\end{equation}
By using \eqref{bddness} we have, for $j \ge 2$,
\begin{equation*}
\begin{split}
\norm{\delta_j}
 &\le \Norm{\frac{1}{k}\int_{t_{j-1}}^{t_j} (\tau - t_{j-1})Gu_{tt}(\tau)\dd \tau}
 \le \int_{t_{j-1}}^{t_j} \norm{A^{3-\frac{\beta}{2}} E(\tau) A^{\frac{\beta}{2}} v}\, \dd \tau\\
 &\le C \int_{t_{j-1}}^{t_j} \tau^{\frac{-6+\beta}{4}}\,\dd \tau\snorm[\beta]{v},
\end{split}
\end{equation*}
so that, by H\"older's inequality with $p=\frac{4}{\beta}$ and $q = \frac{4}{4-\beta}$, $1 \le \beta <4$,
\begin{align*}
\int_{t_{j-1}}^{t_j}\tau^{\frac{-6+\beta}{4}}\,\dd \tau
  \le Ck^{\frac{\beta}{4}}\Big( \int_{t_{j-1}}^{t_j} 
\big( \tau^{\frac{-6 + \beta}{4}} \big)^{\frac{4}{4-\beta}} \, \dd \tau \Big)^{\frac{4-\beta}{4}}
\le Ck^{\frac{\beta}{4}}\Big(
\frac{\beta-4}{2}\big(t_{j-1}^{-\frac{2}{4-\beta}} 
- t_{j}^{-\frac{2}{4-\beta}}\big) \Big)^{\frac{4-\beta}{4}}
 \le Ck^{\frac{\beta}{4}} t_{j-1}^{-\frac{1}{2}}.
\end{align*}
The same result is obtained with $\beta = 4$. For $j=1$ we have
\begin{align*}
 \norm{\delta_1}
 \le \Norm{\frac{1}{k} \int_0^k \tau G u_{tt}(\tau)\,\dd \tau}
  \le C \frac{1}{k} \int_0^k \tau^{\frac{-2+\beta}{4}}\,\dd \tau \snorm[\beta]{v}
 \le C\frac{4}{2+\beta} k^{\frac{-2+\beta}{4}}\snorm[\beta]{v}
  \le Ck^{\frac{\beta}{4}} t_1^{-\frac{1}{2}}\snorm[\beta]{v}.
\end{align*}
So we have, for $j \ge 1$,
\begin{equation*}
\norm{\delta_j} \le Ck^{\frac{\beta}{4}} t_{j}^{-\frac{1}{2}}\snorm[\beta]{v}.
\end{equation*}
Hence, 
\begin{equation}\label{set3}
 k \sum_{j=1}^n \norm{\delta_j} \le ck^{\frac{\beta}{4}}t_n^{\frac{1}{2}}\snorm[\beta]{v},\quad
 k \sum_{j=1}^n t_{j-1} \norm{\delta_j}^2 \le Ck^{\frac{\beta}{2}}  t_n \snorm[\beta]{v}^2.
\end{equation}
Put \eqref{set1},\, \eqref{num3},\, \eqref{set2}, and \eqref{set3} in \eqref{3.42}, to get
\begin{equation*}
 \norm{e_n} \le C(h^{\beta} +k^{\frac{\beta}{4}}) \snorm[\beta]{v}.
\end{equation*}
This completes the proof \eqref{lemp1}.

To prove \eqref{lemp2} we recall \eqref{3.41} and let $v \in \dot{H}^{\beta-2}$. For the first term we  write  $ k \sum_{j=1}^n \norm{\rho_j}^2 = k \norm{\rho_1}^2 + k \sum_{j=2}^n \norm{\rho_j}^2$, where by \eqref{bddness}
\begin{equation*}
 k \norm{\rho_1}^2
 \le kCh^{2\beta} \norm{AE(k) A^{\frac{\beta-2}{2}}v}^2
 \le Ch^{2\beta} \snorm[\beta-2]{v},
\end{equation*}
and
\begin{align*}
 k \sum_{j=2}^n \norm{\rho_j}^2
 & = \sum_{j=2}^n \int_{t_{j-1}}^{t_j} \Norm{\rho(s) + \int_s^{t_j} \rho_t(\tau) \,\dd \tau}^2\, \dd s\\
 & \le  2\sum_{j=2}^n \int_{t_{j-1}}^{t_j}\norm{\rho(s)}^2\,\dd s + 2\sum_{j=2}^n \int_{t_{j-1}}^{t_j} \Norm{\int_s^{t_j} \rho_t(\tau) \,\dd \tau}^2\, \dd s\\
& \le  2\int_{t_1}^{t_n}\norm{\rho(s)}^2\,\dd s
  + 2\sum_{j=2}^n \int_{t_{j-1}}^{t_j} (t_j -s) \int_{t_{j-1}}^{t_j} \norm{\rho_t(\tau)}^2 \,\dd \tau \, \dd s\\
 & \le 2\int_0^{t_n}\norm{\rho}^2\,\dd\tau + 2k\int_{t_1}^{t_n} \tau \norm{\rho_t}^2 \,\dd \tau,
\end{align*}
since $t_j -s \le k \le \tau$ and  where, by \eqref{2.18},
\begin{equation*}
\begin{aligned}
& \int_{0}^{t_n}\norm{\rho}^2\,\dd \tau
\le Ch^{2\beta} \snorm[\beta-2]{v}^2,\\
\end{aligned}
\end{equation*}
and
\begin{equation*}
\begin{aligned}
 k\int_{t_1}^{t_n} \tau \norm{\rho_t}^2 \,\dd \tau
 &\le Ch^{2\beta}k \int_{t_1}^{t_n} \tau \norm{A^3 E(\tau) A^{\frac{\beta-2}{2}}v}^2 \,\dd \tau\\
& \le Ch^{2\beta} k \int_{k}^{t_n} \tau^{-2} \,\dd \tau\, \snorm[\beta-2]{v}^2
 \le Ch^{2\beta} k (k^{-1} - t_n^{-1})\snorm[\beta-2]{v}^2 
 \le Ch^{2\beta} \snorm[\beta-2]{v}^2.
\end{aligned}
\end{equation*}
So
\begin{equation}\label{3.49}
  k\sum_{j=1}^n \norm{\rho_j}^2 \le Ch^{2\beta} \snorm[\beta-2]{v}^2 .
\end{equation}
Now we compute $k\sum_{j=1}^n \norm{\eta_j}$. Recall that $\eta_j = -(R_h-I)G\pt u_j$ and $\eta = -(R_h - I)Gu_t$, so
\begin{align*}
\norm{\eta_j}
  = \Norm{(R_h-I)G\frac{1}{k}\int_{t_{j-1}}^{t_j} u_t\,\dd \tau}
   \le \frac{1}{k}\int_{t_{j-1}}^{t_j} \norm{(R_h-I)Gu_t}\,\dd \tau
  \le \frac{1}{k} \int_{t_{j-1}}^{t_j} \norm{\eta}\,\dd \tau,
\end{align*}
and hence by  \eqref{yek} we have
\begin{equation}\label{3.50}
 k \sum_{j=1}^n \norm{\eta_j} \le \int_0^{t_n}\norm{\eta}\,\dd\tau \le Ch^{\beta} \abs{\log h} \snorm[\beta-2]{v}.
\end{equation}
For computing $k\sum_{j=1}^n\norm{\delta_j}$  we use \eqref{bddness} and obtain for $1 \le \beta <4$,
\begin{align*}
 \norm{\delta_j}
 \le \frac{1}{k} \int_{t_{j-1}}^{t_j} (\tau - t_{j-1}) \norm{Gu_{tt}(\tau)}\,\dd \tau
  \le \int_{t_{j-1}}^{t_j} \norm{A^{4-\frac{\beta}{2}} E(\tau) A^{\frac{\beta-2}{2}}v}\,\dd \tau
 \le C \int_{t_{j-1}}^{t_j} \tau^{-2 +\frac{\beta}{4}}\,\dd \tau\, \snorm[\beta-2]{v}.
 \end{align*}
 Hence,
 \begin{equation*}
k \sum_{j=2}^n \norm{\delta_j}
  \le  Ck \int_{k}^{t_n} \tau^{-2+\frac{\beta}{4}} \, \dd \tau\snorm[\beta-2]{v}
  \le  Ck\frac{4}{4-\beta} \Big( k^{-1+\frac{\beta}{4}} -
  t_n^{-1+\frac{\beta}{4}} \Big)\snorm[\beta-2]{v} 
 \le  \frac{C}{4-\beta}k^{\frac{\beta}{4}}\snorm[\beta-2]{v} 
\end{equation*}
and
\begin{equation*}
k\norm{\delta_1}
 \le \int_0^k \tau\norm{Gu_{tt}(\tau)}\,\dd \tau
 \le \int_0^k \tau \norm{A^{4-\frac{\beta}{2}} E(\tau) A^{\frac{\beta-2}{2}}v}\,\dd \tau
 \le C\int_0^k \tau^{\frac{\beta}{4} -1}\,\dd \tau\, \snorm[\beta-2]{v}
 \le \frac{C}{4-\beta}k^{\frac{\beta}{4}}\snorm[\beta-2]{v}.
\end{equation*}
Therefore, for $1 \le \beta < 4$,
\begin{equation*}
 k \sum_{j=1}^n \norm{\delta_j} \le \frac{C}{4 - \beta} k^{\frac{\beta}{4}} \snorm[\beta -2]{v}.
\end{equation*}
If we put $\frac{1}{4-\beta} = \abs{\log k}$, we also have
\begin{align*}
 k \sum_{j=1}^n \norm{\delta_j}
 \le  \frac{C}{4-\beta} k^{1-\frac{4-\beta}{4}} \snorm[\beta-2]{v}
 =  C\abs{\log k} k \ee^{-\frac{4-\beta}{4}\log k}\snorm[\beta-2]{v}
 \le Ck\abs{\log k}\snorm[\beta -2]{v}
= C \abs{\log k} \snorm[\beta-2]{v}.
\end{align*}
Therefore, for $1\le \beta \le 4$, we have
\begin{equation}\label{3.51}
k \sum_{j=1}^n \norm{\delta_j} \le C_{\beta,k}k^{\frac{\beta}{4}}\snorm[\beta-2]{v}.
\end{equation}
where $C_{\beta,k} = \frac{C}{4-\beta}$ for $\beta < 4$ and $C_{\beta,k} = C\abs{\log k}$ for $\beta =4$.
Finally we put \eqref{3.49}, \,\eqref{3.50} and \eqref{3.51} in \eqref{3.41}, to get
\begin{equation*}
\Big( k \sum_{j=1}^n \norm{e_j}^2  \Big)^{\frac{1}{2}}
\le \Big( Ch^{\beta} \abs{\log h} + C_{\beta,k}k^{\frac{\beta}{4}}\Big)\snorm[\beta-2]{v}.
\end{equation*}
This completes the proof.  
\end{proof}
\section{Approximation of the linear Cahn-Hilliard-Cook
  equation}\label{sec:3}
Consider the linear Cahn-Hilliard-Cook equation \eqref{1.4} with mild solution
\begin{equation}\label{4.1}
  X(t) = E(t)X_0 +  \int_0^t E(t-s)\,\dd W(s).
\end{equation}
We recall the isometry of the It\^o integral,
\begin{equation}\label{isometry}
\IE\Big\{\Norm{\int_0^t B(s) \,\dd W(s)}^2 \Big\}
= \IE \Big\{\int_0^t \norm[\HS]{B(s) Q^{\frac{1}{2}}}^2\,\dd s\Big\},
\end{equation}
where the Hilbert-Schmidt norm is defined by
\begin{equation} \label{HSdef}
\norm[\HS]{T}^2 = \sum_{l=1}^{\infty} \norm{T \phi_l}^2.
\end{equation}
Here $\lbrace\phi_l\rbrace_{l=1}^{\infty}$ is an arbitrary  orthonormal basis for $H$.
In the next theorem we consider the regularity of the mild solution
\eqref{4.1}.  The $L_2(\Omega ,\dot{H}^\beta)$-norm is defined in \eqref{meansquare}.
\begin{theorem}\label{thm1}
 Let $X(t)$ be the mild solution \eqref{4.1} with $X_0 \in L_2(\Omega ,
 \dot{H}^{\beta})$ and $\norm[\HS]{A^{\frac{\beta - 2}{2}}
   Q^{\frac{1}{2}}} < \infty$ for some $\beta \ge 0$.  Then
 \begin{equation*}
 \norm[L_2(\Omega , \dot{H}^{\beta})]{X(t)}
\le C\Big( \norm[L_2(\Omega , \dot{H}^{\beta})]{X_0} + \norm[\HS]{A^{\frac{\beta - 2}{2}} Q^{\frac{1}{2}}} \Big), \quad t \ge 0.
\end{equation*}
Moreover, if $\beta=0$, then for the norm in $H$ we have 
 \begin{equation*}
 \norm[L_2(\Omega ,H)]{X(t)}
\le C\Big( \norm[L_2(\Omega , H)]{X_0} 
+ 
\norm[\HS]{A^{-1} Q^{\frac{1}{2}}}+t^{\frac12} \Big), \quad t \ge 0.
\end{equation*}
\end{theorem}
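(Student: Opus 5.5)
The plan is to split the mild solution \eqref{4.1} into the deterministic part $E(t)X_0$ and the stochastic convolution $W_A(t)$, and to bound each separately in $L_2(\Omega,\dot H^\beta)$.

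\textbf{Deterministic part.} Since $\snorm[\beta]{E(t)X_0}=\norm{A^{\beta/2}E(t)PX_0}=\norm{E(t)A^{\beta/2}PX_0}$ and $E(t)$ is a contraction on $\dot H$, we have $\snorm[\beta]{E(t)X_0}\le\snorm[\beta]{X_0}$ pathwise. Taking expectations gives the first term of the bound.

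\textbf{Stochastic part.} I will apply $A^{\beta/2}P$, which commutes with $E(t-s)$, inside the It\^o integral. The isometry \eqref{isometry} then gives
\begin{equation*}
\IE\big\{\snorm[\beta]{W_A(t)}^2\big\}=\int_0^t\norm[\HS]{A^{\frac{\beta}{2}}E(t-s)PQ^{\frac12}}^2\,\dd s .
\end{equation*}
Next I write $A^{\beta/2}E(s)P=AE(s)P\cdot A^{\frac{\beta-2}{2}}P$ and expand the Hilbert--Schmidt norm over an orthonormal basis $\{\phi_l\}$. This yields
\begin{equation*}
\sum_l\int_0^t\norm{AE(s)P\,A^{\frac{\beta-2}{2}}Q^{\frac12}\phi_l}^2\,\dd s .
\end{equation*}
The smoothing estimate \eqref{half} bounds each integral by $C\norm{A^{\frac{\beta-2}{2}}Q^{\frac12}\phi_l}^2$, with $C$ independent of $t$. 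Summing over $l$ gives $C\norm[\HS]{A^{\frac{\beta-2}{2}}Q^{\frac12}}^2$. Combining the two parts with the triangle inequality in $L_2(\Omega,\dot H^\beta)$ proves the first claim.

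\textbf{The case $\beta=0$ with the full $H$-norm.} Here I use $\norm{v}^2=\snorm[0]{v}^2+(v,\varphi_0)^2$, or equivalently the decomposition \eqref{fix}. The $P$-component is bounded by the first claim with $\beta=0$. The remaining parts are $(I-P)X_0$, bounded by $\norm{X_0}$, and $(I-P)W(t)$. For the latter,
\begin{equation*}
\IE\norm{(I-P)W(t)}^2=t\,\norm[\HS]{(I-P)Q^{\frac12}}^2=t\,\norm{Q^{\frac12}\varphi_0}^2\le t\,\norm{Q},
\end{equation*}
since $I-P$ is the rank-one projection onto $\varphi_0$. This gives the $t^{1/2}$ term, with $\norm{Q}^{1/2}$ absorbed into $C$.

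\textbf{Main obstacle.} The only delicate point is justifying the interchange of $A^{\beta/2}$ with the stochastic integral for general $\beta$. I would handle it by truncating to finitely many eigenmodes, where everything is bounded, and then passing to the limit using the uniform bound just obtained. Alternatively, one can compute componentwise in the eigenbasis $\varphi_j$, which gives the same sum directly.
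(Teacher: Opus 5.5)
Your proposal is correct and follows the paper's proof essentially step for step. Both use the splitting \eqref{fix}, the It\^o isometry with the Hilbert--Schmidt expansion, the factorization $A^{\beta/2}E(s)P = AE(s)P\,A^{(\beta-2)/2}$ combined with the $t$-uniform bound \eqref{half}, and the same treatment of the mean-value components $(X_0,\varphi_0)$ and $(W(t),\varphi_0)$ when $\beta=0$. Your explicit identity $\IE\{(W(t),\varphi_0)^2\} = t\norm{Q^{1/2}\varphi_0}^2 \le t\norm{Q}$ and your remark on commuting $A^{\beta/2}$ with the stochastic integral only make explicit what the paper leaves implicit.
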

\begin{proof}  Recall the definition of $\snorm[\beta]{\cdot}$ in \eqref{defnorm}.  
By using the isometry \eqref{isometry}, the definition of the
Hilbert-Schmidt norm \eqref{HSdef}, and \eqref{bddness},\, \eqref{half} we get, for
$\beta\ge0$, see \eqref{fix}, 
\begin{align*}
  \norm[L_2(\Omega , \dot{H}^{\beta})]{X(t)}^2
  & = \IE \Big\{\Big|E(t) X_0 + \int_0^t E(t-s) \,\dd W(s)\Big|_{\beta}^2\Big\}\\
  & \le C \Big(\IE \big\{\big\|A^{\frac\beta2}PE(t) X_0\big\|^2\big\} + \IE \Big\{ \Norm{\int_0^t A^{\frac{\beta}{2}}P E(t-s) \,\dd W(s)}^2\Big\} \Big)\\
  & \le  C \Big(\norm[L_2(\Omega , \dot{H}^{\beta})]{X_0}^2 +  \int_0^t\norm[\HS]{A^{\frac{\beta}{2}} E(s)P Q^{\frac{1}{2}}}^2 \,\dd s \Big)\\
  & \le  C \Big(\norm[L_2(\Omega , \dot{H}^{\beta})]{X_0}^2 +  \sum_{l=1}^{\infty}\int_0^t\norm{A^{\frac{\beta}{2}} E(s)P Q^{\frac{1}{2}}\phi_l}^2 \,\dd s \Big)\\
 & \le  C \Big(\norm[L_2(\Omega , \dot{H}^{\beta})]{X_0}^2 +  \sum_{l=1}^{\infty} \norm{A^{\frac{\beta - 2}{2}} Q^{\frac{1}{2}} \phi_l}^2 \Big)\\
  & =  C \Big(\norm[L_2(\Omega , \dot{H}^{\beta})]{X_0}^2 +  \norm[\HS]{A^{\frac{\beta - 2}{2}} Q^{\frac{1}{2}}}^2 \Big).
\end{align*}
For $\beta=0$ and the $H$-norm, there are  additional terms
\begin{align*}
\IE\{\norm{(I-P)X_0}^2\}
&= \IE\{(X_0,\varphi_0)^2\}
\le \norm[L_2(\Omega , H)]{X_0}^2 ,\\
\IE\{\norm{(I-P)W(t)}^2\}
&= \IE\{(W(t),\varphi_0)^2\}
\le  Ct.
\end{align*}
The proof is complete.  
\end{proof}

The finite element approximation of the linear Cahn-Hilliard-Cook
equation is: Find $X_h(t) \in S_h$ such that
\begin{equation}\label{FMCH}
\dd X_h + A_h^2 X_h \, \dd t = P_h \,\dd  W,\quad t>0;\quad 
X_h(0) = P_h X_0, 
\end{equation}
with the mild solution 
\begin{equation}\label{4.2}
 X_h(t) = E_h(t) P_h X_0 + \int_0^t E_h(t-s) P_h \,\dd W(s).
\end{equation}
Recall that $r\ge2$ is the order of the finite element method,
cf.~\eqref{ritzerr}.

\begin{theorem}\label{thm2}
Let $X_h$ and $X$ be the mild solutions \eqref{4.2} and \eqref{4.1}
with $X_0 \in L_2(\Omega , \dot{H}^{\beta})$ and assume that 
$\norm[\HS]{A^{\frac{\beta - 2}{2}} Q^{\frac{1}{2}}} < \infty$  for
some $\beta \in[1,r]$.
Then there are $h_0$ and $C$, such
that, for $h\le h_0$ and $t \ge 0$, 
\begin{equation*}
 \|X_h(t) - X(t)\|_{L_2(\Omega , H)}
 \le Ch^{\beta}\big( \norm[L_2(\Omega , \dot{H}^{\beta})]{X_0} 
 + \abs{\log h}\norm[\HS]{A^{\frac{\beta - 2}{2}} Q^{\frac{1}{2}}} \big).
\end{equation*}
\end{theorem}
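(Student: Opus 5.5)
We start from the error representation \eqref{errorX}, which writes the error as a deterministic part plus a stochastic part:
\begin{equation*}
X_h(t)-X(t) = F_h(t)PX_0 + \int_0^t F_h(t-s)P\,\dd W(s),
\end{equation*}
with $F_h(t)=E_h(t)P_h-E(t)$. Since $X_0$ and the It\^o integral need not be independent, we do not use orthogonality. Instead we apply the triangle inequality in $L_2(\Omega,H)$ and bound the two terms separately.

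For the deterministic term we apply \eqref{first} of Theorem \ref{lem1} pathwise. This gives $\norm{F_h(t)PX_0}\le Ch^\beta\snorm[\beta]{X_0}$. Squaring and taking expectations then yields $\norm[L_2(\Omega,H)]{F_h(t)PX_0}\le Ch^\beta\norm[L_2(\Omega,\dot H^\beta)]{X_0}$.

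For the stochastic term we use the It\^o isometry \eqref{isometry} and the substitution $\tau=t-s$:
\begin{equation*}
\IE\Big\{\Norm{\int_0^t F_h(t-s)P\,\dd W(s)}^2\Big\}
= \int_0^t \norm[\HS]{F_h(\tau)PQ^{\frac12}}^2\,\dd\tau
= \sum_{l=1}^\infty \int_0^t \norm{F_h(\tau)PQ^{\frac12}\phi_l}^2\,\dd\tau .
\end{equation*}
Here $\{\phi_l\}$ is an orthonormal basis of $H$, and the interchange of sum and integral is justified by monotone convergence. To each term we apply \eqref{second} of Theorem \ref{lem1} with $v=PQ^{\frac12}\phi_l$. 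This requires $v\in\dot H^{\beta-2}$, which holds since $\snorm[\beta-2]{v}=\norm{A^{\frac{\beta-2}{2}}PQ^{\frac12}\phi_l}$ and the sum of their squares is $\norm[\HS]{A^{\frac{\beta-2}{2}}Q^{\frac12}}^2<\infty$; here $A^{\frac{\beta-2}{2}}$ is understood to act on $Pv$. Each term is therefore bounded by $C\abs{\log h}^2h^{2\beta}\norm{A^{\frac{\beta-2}{2}}Q^{\frac12}\phi_l}^2$. Summing over $l$ gives the bound $C\abs{\log h}^2h^{2\beta}\norm[\HS]{A^{\frac{\beta-2}{2}}Q^{\frac12}}^2$, uniformly in $t$.

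Taking square roots and adding the two contributions gives the claimed estimate. The only delicate point is checking that \eqref{second} applies termwise: $\beta\ge1$ makes $\beta-2\ge-1$, so $E_h(\tau)P_h v$ is defined, and the constant in \eqref{second} is uniform in $v$ and $t$, so the sum over $l$ is harmless. The main analytic work was already done in \eqref{second}, so this step is essentially bookkeeping.
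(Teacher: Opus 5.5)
Your proposal is correct and follows the paper's proof essentially step for step. You apply the triangle inequality to the error representation \eqref{errorX}, use estimate \eqref{first} of Theorem~\ref{lem1} for $F_h(t)PX_0$, and for the stochastic convolution use the It\^o isometry \eqref{isometry}, expand the Hilbert--Schmidt norm in an orthonormal basis, and apply \eqref{second} termwise.
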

\begin{proof}
 Use \eqref{4.1} and \eqref{4.2} and set $F_h(t) = E_h(t) P_h - E(t)$ to get
 \begin{equation*}
 \norm[L_2(\Omega , H)]{X_h(t) - X(t)} \le \norm[L_2(\Omega , H)]{e_1(t)} + \norm[L_2(\Omega , H)]{e_2(t)},
\end{equation*}
where, see \eqref{errorX},
\begin{align*}
e_1(t) &= F_h(t) X_0 = F_h(t) PX_0,\\
e_2(t) &= \int_0^tF_h(t-s) \,\dd W(s) = \int_0^t F_h(t-s)P \,\dd W(s).
\end{align*}
By using Theorem
\ref{lem1} we get
\begin{equation*}\label{4.4}
\norm[L_2(\Omega , H)]{e_1(t)}
 = \big( \IE \norm{F_h(t) X_0}^2 \big)^{\frac{1}{2}}
\le Ch^{\beta} \big( \IE \snorm[\beta]{X_0}^2 \big)^{\frac{1}{2}}
 = Ch^{\beta} \norm[L_2(\Omega , \dot{H}^{\beta})]{X_0}.
\end{equation*}
For the second term we use the isometry \eqref{isometry}, the
definition of the Hilbert-Schmidt norm \eqref{HSdef}, and Theorem
\ref{lem1},
\begin{align*}
 \norm[L_2(\Omega , H)]{e_2(t)}^2
 & = \IE \Big( \Norm{\int_0^t F_h(t-s) \,\dd W(s)}^2\Big) \\
 & = \int_0^t \norm[\HS]{F_h(t-s) Q^{\frac{1}{2}}}^2 \,\dd s
 = \sum_{l=1}^{\infty} \int_0^t \norm{F_h(s) Q^{\frac{1}{2}} \phi_l}^2 \,\dd s\\
 & \le C\abs{\log h}^2h^{2\beta} \sum_{l=1}^{\infty} 
   \snorm[\beta - 2]{Q^{\frac{1}{2}}\phi_l}^2
  = C\abs{\log h}^2h^{2\beta} \norm[\HS]{A^{(\beta - 2)/2}Q^{\frac{1}{2}}}^2.
\end{align*}
The proof is complete. 
\end{proof}

Now we consider the fully discrete Cahn-Hilliard-Cook equation \eqref{DeltaXhn} with mild solution
\begin{equation}\label{4.6}
  X_{h,n} = E_{kh}^n P_h X_0 + \sum_{j=1}^n E_{kh}^{n-j+1} P_h
  \,\delta W_j,
\quad \text{where $E_{kh} = (I + kA_h^2)^{-1}$.}
\end{equation}

\begin{theorem}\label{thm3}
  Let $X_{h,n}$ and $X$ be given by \eqref{4.6} and \eqref{4.1} with
  $X_0 \in L_2(\Omega , \dot{H}^{\beta})$ and
  $\norm[\HS]{A^{\frac{\beta - 2}{2}}Q^{\frac{1}{2}}} < \infty$ for
  some $\beta \in [1, \min(r,4)]$.  Then there are $h_0,k_0$ and $C$, such
that, for $h\le h_0$, $k\le k_0$, and $n \ge 1$, 
\begin{align*}
  \|X_{h,n} -  X(t_n)  \|_{L_2(\Omega , H)}
    \le \big(C\abs{\log h} h^{\beta} 
 + C_{\beta,k}k^{\frac{\beta}{4}}\big)
\big( \norm[L_2(\Omega , \dot{H}^{\beta})]{X_0} 
+ \norm[\HS]{A^{\frac{\beta - 2}{2}}Q^{\frac{1}{2}}} \big),
\end{align*}
where $C_{\beta,k} = \frac{C}{4-\beta}$ for $\beta <4$ and $C_{\beta,k} = C\abs{\log k}$ for  $\beta =4$.
\end{theorem}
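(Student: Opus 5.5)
We follow the proof of Theorem \ref{thm2}, with Theorem \ref{lemma2} playing the role of Theorem \ref{lem1}. First we split the error as
\begin{equation*}
X_{h,n}-X(t_n) = F_nX_0 + \Big(\sum_{j=1}^n E_{kh}^{n-j+1}P_h\,\delta W_j - \int_0^{t_n}E(t_n-s)\,\dd W(s)\Big) =: e_{1,n}+e_{2,n},
\end{equation*}
where $F_n=E_{kh}^nP_h-E(t_n)$. As in \eqref{errorX}, the constant-mode parts cancel: $E_{kh}$ acts as the identity on $\varphi_0$, so both stochastic terms contain $(I-P)W(t_n)$. Hence we may replace $X_0$ by $PX_0$ and $\dd W$ by $P\,\dd W$ throughout. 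For the first term, \eqref{lemp1} applied pathwise and then in mean square gives
\begin{equation*}
\norm[L_2(\Omega,H)]{e_{1,n}}\le C(h^\beta+k^{\beta/4})\norm[L_2(\Omega,\dot H^\beta)]{X_0}.
\end{equation*}
This is dominated by the claimed bound, since $|\log h|\ge1$ for small $h$ and $C_{\beta,k}\ge C$.

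For the stochastic term we write $\delta W_j=\int_{t_{j-1}}^{t_j}\dd W(s)$, so that
\begin{equation*}
e_{2,n}=\sum_{j=1}^n\int_{t_{j-1}}^{t_j}\big(E_{kh}^{n-j+1}P_h-E(t_n-s)\big)\,\dd W(s).
\end{equation*}
By the isometry \eqref{isometry}, $\norm[L_2(\Omega,H)]{e_{2,n}}^2$ equals
\begin{equation*}
\sum_{j=1}^n\int_{t_{j-1}}^{t_j}\norm[\HS]{\big(E_{kh}^{n-j+1}P_h-E(t_n-s)\big)Q^{\frac12}}^2\dd s .
\end{equation*}
We split the integrand as $F_{n-j+1}+\big(E(t_{n-j+1})-E(t_n-s)\big)$. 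For the first piece, expanding the Hilbert--Schmidt norm in an orthonormal basis $\{\phi_l\}$ and reindexing $m=n-j+1$ gives $\sum_l k\sum_{m=1}^n\norm{F_mQ^{1/2}\phi_l}^2$. By \eqref{lemp2} this is bounded by $(C|\log h|h^\beta+C_{\beta,k}k^{\beta/4})^2\norm[\HS]{A^{\frac{\beta-2}2}Q^{\frac12}}^2$.

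The main obstacle is the second piece, the time-discretization error of the exact semigroup at non-grid points. Here $t_n-s\in[t_{m-1},t_m)$ with $m=n-j+1$, so we must control
\begin{equation*}
\int_0^{t_n}\norm{(E(\tau)-E(\lceil\tau/k\rceil k))A^{-\frac{\beta-2}{2}}w}^2\,\dd\tau
\end{equation*}
by $Ck^{\beta/2}\snorm[0]{w}^2$ after setting $w=A^{\frac{\beta-2}2}Q^{\frac12}\phi_l$. We will do this spectrally, mode by mode. For eigenvalue $\lambda$ and $\tau\in[t_{m-1},t_m)$ we have $|e^{-\tau\lambda^2}-e^{-t_m\lambda^2}|\le e^{-\tau\lambda^2}\min(1,k\lambda^2)$. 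Then
\begin{equation*}
\int_0^\infty e^{-2\tau\lambda^2}\,\dd\tau\,\min(1,k\lambda^2)^2\lambda^{-(\beta-2)}
\le \tfrac12\lambda^{-\beta}\min(1,k\lambda^2)^2 \le Ck^{\beta/2},
\end{equation*}
using $\min(1,x)^2\le x^{\beta/2}$ for $\beta\le4$ with $x=k\lambda^2$. Summing over modes and over $l$ bounds this piece by $Ck^{\beta/2}\norm[\HS]{A^{\frac{\beta-2}2}Q^{\frac12}}^2$. Equivalently, one could argue via \eqref{bddness} with $E(\tau)-E(t_m)=\int_\tau^{t_m}A^2E(\sigma)\,\dd\sigma$ and interpolation, but the spectral computation is cleaner and gives a constant uniform in $\beta$.

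Combining the two pieces with $(a+b)^2\le2a^2+2b^2$ and taking square roots yields the stated estimate for $e_{2,n}$. Adding the bound for $e_{1,n}$ completes the proof, uniformly in $n\ge1$.
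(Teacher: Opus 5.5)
Your proposal is correct and follows the paper's proof. It uses the same splitting into $F_nX_0$, the stochastic piece with $F_{n-j+1}$ handled by the It\^o isometry and \eqref{lemp2}, and the time-discretization piece $E(t_n-t_{j-1})-E(t_n-s)$. Your mode-by-mode bound $\lambda^{-\beta}\min(1,k\lambda^2)^2\le k^{\beta/2}$, combined with $\int_0^\infty \lambda^2 e^{-2\tau\lambda^2}\,\dd\tau=\frac12$, is just the spectral form of the paper's use of $\norm{A^{-\beta/2}(E(t)-I)w}\le Ct^{\beta/4}\norm{w}$ together with \eqref{half}.
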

\begin{proof}
By using \eqref{4.1} and \eqref{4.6} we get, with $F_n = E_{kh}^n P_h - E(t_n)$,
\begin{align*}
 e_n
  = F_n X_0 +  \sum_{j=1}^n  \int_{t_{j-1}}^{t_j}  F_{n-j+1} \, \dd W(s)
  + \sum_{j=1}^{n}\int_{t_{j-1}}^{t_j} \big( E(t_n - t_{j-1}) -
 E(t_n -s)\big) \, \dd W(s)
= e_{n,1} + e_{n,2} + e_{n,3}.
 \end{align*}
By using Theorem \ref{lemma2} we have
\begin{equation}\label{4.8}
  \norm[L_2(\Omega , H)]{e_{n,1}} = \big( \IE \norm{F_n X_0}^2 \big)^{\frac{1}{2}}
\le C(h^{\beta}  + k^{\frac{\beta}{4}})\norm[L_2(\Omega , \dot{H}^{\beta})]{X_0}.
\end{equation}
By using the isometry \eqref{isometry} and Theorem \ref{lemma2} we get
\begin{align*}
 \norm[L_2(\Omega , H)]{e_{n,2}}^2
  &= \IE \Big( \Norm{ \sum_{j=1}^n  \int_{t_{j-1}}^{t_j} F_{n-j+1} \, \dd W(s) }^2\Big)\\
 &= \sum_{j=1}^n \int_{t_{j-1}}^{t_j} \norm[\HS]{F_{n-j+1} Q^{\frac{1}{2}}}^2 \,\dd s
  = k \sum_{l=1}^{\infty} \sum_{j=1}^n  \norm{F_{n-j+1} Q^{\frac{1}{2}} \phi_l}^2\\
 & \le \sum_{l=1}^{\infty} \big(C\abs{\log h}h^{\beta} 
 + C_{\beta,k}k^{\frac{\beta}{4}}\big)^2 \snorm[\beta - 2]{ Q^{\frac{1}{2}} \phi_l}^2\\
&  =  \big(C\abs{\log h}h^{\beta} 
  + C_{\beta,k}k^{\frac{\beta}{4}}\big)^2 \norm[\HS]{A^{\frac{\beta - 2}{2}} Q^{\frac{1}{2}}}^2.
 \end{align*}
By using the isometry property \eqref{isometry} again we have
\begin{equation*}
\begin{split}
 \|e_{n,3}  \|_{L_2(\Omega , H)}^2
 &\le  \IE \Big( \Norm{\sum_{j=1}^n \int_{t_{j-1}}^{t_j} 
  ( E(t_n - t_{j-1}) - E(t_n -s)) \, \dd W(s)}^2\Big)\\
 & = \sum_{j=1}^n \int_{t_{j-1}}^{t_j} 
  \norm[\HS]{(E(t_n - t_{j-1}) - E(t_n -s)) Q^{\frac{1}{2}}}^2  \, \dd s\\
 & = \sum_{l=1}^{\infty} \sum_{j=1}^n  \int_{t_{j-1}}^{t_j} \norm{A^{-\frac{\beta}{2}}(E(s - t_{j-1}) - I) A E(t_n -s) A^{\frac{\beta -2}{2}} Q^{\frac{1}{2}} \phi_l}^2  \, \dd s.
\end{split}
\end{equation*}
By using the well-known inequality
\begin{equation*}
 \norm{A^{\frac{-\beta}{2}} \big( E(t) -I\big) w} \le Ct^{\frac{\beta}{4}}\norm{w},
\end{equation*}
 with  $t = s - t_j, w =  A E(t_n -s) A^{\frac{\beta -2}{2}} Q^{\frac{1}{2}} \phi_l$, together with \eqref{half}, we get
\begin{align*}
 \norm[L_2(\Omega , H)]{e_{n,3}}^2
 & \le C k ^{\frac{\beta}{2}} \sum_{l=1}^{\infty} \int_0^{t_n} \norm{A E(t_n -s) A^{\frac{\beta -2}{2}} Q^{\frac{1}{2}} \phi_l}^2 \, \dd s\\
 & \le C k ^{\frac{\beta}{2}} \sum_{l=1}^{\infty} \norm{A^{\frac{\beta -2}{2}} Q^{\frac{1}{2}} \phi_l}^2
 = C k ^{\frac{\beta}{2}} \norm[\HS]{A^{\frac{\beta -2}{2}} Q^{\frac{1}{2}}}^2.
 \end{align*}
Putting these together proves the desired result.
\end{proof}

\section{Conclusions} \label{sec:4} 
We have studied numerical approximation of the linearized
Cahn-Hilliard-Cook equation by a spatially semidiscrete finite element
method and a completely discrete method based on the implicit Euler
time-stepping.  We have proved strong convergence estimates of optimal
order except for a logarithmic factor.  

By means of the It\^o isometry the proofs are reduced to proving error
estimates for the corresponding deterministic problem, that is, error
estimates for approximations of the Cahn-Hilliard semigroup $\ee^{-tA^2}$.  
This is where the main effort has been spent.  Since the finite
element method is based on $(A_h)^2$ rather than $(A^2)_h$ this analysis
is more difficult than for the linear heat equation in \citet{yubin2}.  

Our results should be viewed as results on approximation of the
stochastic convolution \eqref{stochconvol}, which is a part of the
mild solution of the nonlinear Cahn-Hilliard-Cook equation.  The
remaining part, which solves a nonlinear random evolution problem, is
studied in \citet{KLMchc}, where strong convergence is proved for the
spatially semidiscrete approximation of the nonlinear
Cahn-Hilliard-Cook equation, but without known rate of convergence.
To obtain the optimal rate of convergence remains a challenge.
Another open problem is to study weak convergence.


\end{document}